\documentclass[11pt,letterpaper,reqno]{amsart}
\usepackage[left=30mm,top=30mm,right=30mm,bottom=30mm]{geometry}
\usepackage{amssymb, amsthm, array,subcaption,multicol,times,color,enumerate,graphicx,nicefrac,xcolor,mathrsfs}
\usepackage{tikz}
\usepackage{comment}
\usepackage{hyperref}
\usepackage{url}
\usepackage[ruled,vlined]{algorithm2e}
\hypersetup{
 colorlinks=true, 
 linkcolor=blue, 
 citecolor=blue, 
 filecolor=blue, 
 urlcolor=blue 
}
\usepackage[nameinlink,capitalize,noabbrev]{cleveref}
\Crefname{notation}{Notation}{Notations}
\Crefname{AlgoLine}{Line}{Lines}
\usepackage{tikz}
\usetikzlibrary{arrows.meta,positioning,shapes,calc,decorations.pathreplacing}

\linespread{1.0}

\newtheorem{theorem}{Theorem}
\newtheorem*{theorem*}{Theorem}

\newtheorem{proposition}[theorem]{Proposition}

\newtheorem{conjecture}[theorem]{Conjecture}

\newtheorem*{claim*}{Claim}

{\theoremstyle{remark}
 \newtheorem{remark}[theorem]{Remark}}
 \newtheorem*{remark*}{Remark}
{\theoremstyle{definition}
 \newtheorem{definition}[theorem]{Definition}
 \newtheorem{notation}[theorem]{Notation}
 \newtheorem{example}[theorem]{Example}
}

\newcommand{\K}{\mathbf{k}}

\hyphenation{desin-gu-lar-iza-tion}

\begin{document}

\title{On a Computational Approach to the Nash Blowup Problem}


\author[{F. Castillo}]{Federico Castillo}
\address{Facultad de Matemáticas, Pontificia Universidad Católica de Chile, Santiago, Chile}  
\email{federico.castillo@uc.cl}

\author[D. Duarte]{Daniel Duarte}
\address{Centro de Ciencias Matematicas, UNAM, Morelia, M\'exico}
\email{adduarte@matmor.unam.mx}

\author[M. Leyton-Alvarez]{Maximiliano Leyton-\'Alvarez} %
\address{Instituto de Matem\'aticas, Universidad de Talca, Talca, Chile} %
\email{mleyton@utalca.cl}

\author[A. Liendo]{Alvaro Liendo} %
\address{Instituto de Matem\'aticas, Universidad de Talca, Talca, Chile} %
\email{aliendo@utalca.cl}

\date{\today}

\thanks{{\it 2020 Mathematics Subject Classification}: 14B05, 14E15, 14M25, 52B20, 14-04.  \\
\mbox{\hspace{11pt}}{\it Key words}: Resolution of singularities, Nash blowup, Toric varieties.\\
\mbox{\hspace{11pt}} The first author was partially supported by Fondecyt project 1221133. The second author was partially supported by CONAHCYT project CF-2023-G-33. The third author was partially supported by Fondecyt project 1221535. Finally, the fourth author was partially supported by Fondecyt project 1240101.}

\begin{abstract}
In this paper we describe the implementation that led to the counterexamples to the Nash blowup conjectures recently discovered by the authors. We also provide new examples of toric varieties with prescribed singularities that are not resolved by the normalized Nash blowup, including cyclic quotient singularities, toric hypersurfaces, and Q-factorial Gorenstein singularities. In addition, we report extensive computational evidence: tens of thousands of two-dimensional toric varieties that are resolved by iterating the Nash blowup, and millions of three-dimensional toric varieties that are resolved by iterating the normalized Nash blowup. This provides positive evidence for the remaining open cases of the conjectures.
\end{abstract}

\maketitle

\section*{Introduction}

The problem of resolution of singularities is a central topic in algebraic geometry \cite{kollar2009lectures}. While resolutions are known to exist for varieties over characteristic zero fields \cite{hironaka1964resolution}, the  positive characteristic case has resisted many attempts over time. 

In the early sixties, John Nash proposed a method, known as Nash blowups, for approaching the resolution problem. Informally, the Nash blowup replaces the singular points of a variety by the tangency at nearby non-singular points \cite{Nob75}. It was proposed to resolve singularities by iterating this process alone or by composing it with the normalization \cite{Se54,GS1,Sp90}.

Over the past fifty years, many results have supported Nash’s idea (see, for instance, \cite{GS2,Hi83,Sp90,DDR}). However, in our recent works \cite{CDLAL,CDLL3d} we proved that, in general, the answer to Nash’s question is negative: there exist singular algebraic varieties for which the iteration of Nash blowups or normalized Nash blowups never produce a resolution. Notably, the counterexamples were found within the realm of toric varieties.

Ever since their introduction in the 1970s, toric varieties have offered a rich environment for investigating broader theories, largely due to the two-way correspondence they establish between polyhedral combinatorics and algebraic geometry. When combined with modern computational power, toric varieties have become an abundant source of explicit examples in algebraic geometry. In particular, the counterexamples to Nash’s question mentioned above were discovered through extensive computer experimentation.

In order to turn Nash's original question into a computational problem, we built on prior works \cite{GS1,GrMi12,GoTe14,DJNB} that provide a combinatorial description of the Nash blowup for toric varieties. Thanks to this description, one can translate the Nash blowup process into an algorithm on the combinatorial data defining a toric variety. We implemented this algorithm in the computer algebra system SageMath \cite{sagemath} and used it to systematically explore the conjectures. Through this computational approach, we identified the first examples where the Nash blowup process fails to resolve a singularity, instead entering a loop.

In this paper we present the algorithms and implementation that led to these findings.  In addition to the original counterexamples we previously announced in \cite{CDLAL,CDLL3d}, we exhibit new examples of toric singularities with special properties, including cyclic quotient singularities, toric hypersurface singularities, and $\mathbf{Q}$-Gorenstein singularities, that are not resolved by Nash blowups. We also report the positive evidence obtained from our experiments: in millions of tested toric varieties, we found that the iteration of Nash blowups do resolve the singularities of 2-dimensional toric varieties and the iteration of normalized Nash blowups do resolve the singularities of 3-dimensional normal toric varieties.

Some important remarks about our approach to searching for examples are in order. We adopt the usual notation for toric varieties: $\mathsf{N}$ and $\mathsf{M}$ are dual lattices, and normal toric varieties are defined by cones in $\mathsf{N_{\mathbf{R}}}$. Recall that the classical method for resolving toric varieties consists of subdividing a cone $\sigma\subset\mathsf{N_{\mathbf{R}}}$ using stellar subdivisions. On the other hand, the combinatorial description of the Nash blowup can be formulated both in $\mathsf{N}$ and in $\mathsf{M}$. A search for examples on the $\mathsf{N}$-side was previously carried out in \cite{Ataetal}, where no counterexamples were found. In contrast, our search was entirely conducted on the $\mathsf{M}$-side. We adopted this viewpoint because it provides a unified framework for both the normalized and non-normalized Nash blowups. This seemingly slight shift proved decisive: it was in $\mathsf{M}$ that we found the counterexamples.

Another fundamental aspect of our strategy is that we mainly concentrated the initial search within the family of Reeves cones, viewed inside $\mathsf{M_{\mathbf{R}}}$ (see Definition \ref{def:reeves}). We chose this family for exploring Nash’s question because it is known in polyhedral geometry for exhibiting pathological behavior \cite{reeve1957volume,beck2015very}. It was through iterating the Nash blowup, or its normalized version, on the toric variety defined by a Reeves cone in dimension four, that we found the first counterexamples to Nash's questions.

The paper is organized as follows. In Section 1 we review background on toric varieties and fix notation. Section 2 introduces the algorithm for computing the Nash blowups in the toric setting. Section 3 describes our implementation of this algorithm and presents the negative results: the explicit counterexamples and how they were found through computation. In Section 4, we discuss the positive results, namely the large collection of cases where iterated Nash blowups do achieve a resolution. 

This paper was announced in \cite{CDLAL} and \cite{CDLL3d} as the natural complement to the results presented there. Our study of Nash blowups of toric varieties, and the results we obtained, echoes Fulton’s famous remark: \emph{toric varieties have provided a remarkably fertile testing ground for general theories} \cite{fulton1993introduction}.

\subsection*{Acknowledgments}

This collaboration began at the \href{https://sites.google.com/view/agrega0/home}{AGREGA} workshop, which took place at Universidad de Talca in January 2024 where the second named author presented the Nash blowup conjectures. We extend our gratitude to the institution for its support and hospitality.

\section{Preliminaries}

We assume familiarity with the toric geometry notation in \cite{cox2011toric} and with the polyhedral methods of \cite{schrijver1998theory}.
For completeness, we briefly recall the relevant background and notation.

\subsection{Cones}
\label{Cones}
Let $ \mathsf{M} $ and $ \mathsf{N} $ be dual lattices of rank $ n $.
We denote the duality pairing as $\langle \cdot , \cdot\rangle\colon \mathsf{M} \times \mathsf{N} \to \mathbf{R} $. A rational polyhedral cone $ \sigma \subseteq \mathsf{N}_{\mathbf{R}} := \mathsf{N} \otimes_{\mathbf{Z}} \mathbf{R} $ is a set of the form
\begin{equation*}\label{eq:internal}
\sigma = \left\{ \sum_{u\in G} \lambda_u u ~|~ \lambda_{u} \in \mathbf{R}_{\geq 0}, \forall u\in G \right\},
\end{equation*}
where $ G \subseteq \mathsf{N}$ is a finite set that we refer to as the \textit{generators} of $ \sigma $.
If no proper subset of $ G $ generates the same cone, and its elements are primitive, we say the set $ G $ is a set of minimal generators.
A cone $ \sigma $ is pointed if $ \sigma \cap -\sigma = \left\{ \mathbf{0} \right\} $.

By the Weyl-Minkowski Theorem, every rational polyhedral cone $ \sigma $ can be alternatively written as
\begin{equation*}\label{eq:external}
	\sigma = \left\{ u \in \mathsf{N}_{\mathbf{R}} ~|~\langle m , u \rangle \geq 0, \forall m \in F \right\},
\end{equation*}
where $ F \subseteq \mathsf{M} $ is a finite set.
If no proper subset of $ F $ defines the same cone, we say $ F $ is minimal.

\begin{notation} \label{not:1}
Through all this paper, we pick explicit isomorphisms from $ \mathsf{M} $ and $ \mathsf{N} $ to $ \mathbf{Z}^{n} $ such that the duality pairing correspond with the usual dot product. Letting $ \mathbf{A} $ be a $ n \times m $ integral matrix, we define $ \operatorname{Cone}(\mathbf{A})\subset \mathsf{N}_\mathbf{R}$ as the cone generated by the columns of $\mathbf{A}$. In this case the Weyl-Minkowski Theorem states that there exists an integral matrix $ \mathbf{B} $ such that $ \operatorname{Cone}(\mathbf{A}) = \left\{ u\in \mathsf{N}_\mathbf{R}\mid  \mathbf{B}u\geq 0 \right\}$.
\end{notation}

\begin{example}\label{ex:running}
It can be computed using SageMath that the following descriptions are minimal, i.e., on the left we cannot delete any column and on the right we cannot delete any row.
\begin{tiny}
\begin{equation*}
\sigma = \operatorname{Cone}
\left(\left[\begin{array}{rrrrrr}
-2 & -1 & 5 & 0 & 0 & 5 \\
5 & 3 & 4 & -1 & 1 & 1 \\
1 & 2 & -1 & 1 & 4 & -2 \\
2 & -1 & 1 & 2 & 0 & -2
\end{array}\right]\right)
=
\left\{
	m \in \mathbf{R}^{4} \,\middle|\,
	\left[\begin{array}{rrrr}
15 & 8 & -2 & 5 \\
15 & 5 & 1 & 2 \\
2 & 4 & -1 & 8 \\
41 & -11 & 57 & 40 \\
-2 & 40 & -10 & 25 \\
9 & 5 & 45 & -20 \\
3 & 1 & 13 & -6 \\
5 & 1 & 21 & -8
\end{array}\right]
	\left[\begin{array}{r}
 m_1 \\
 m_2 \\
 m_3 \\
 m_4 \end{array}\right]
\geq
	\left[\begin{array}{r}
0 \\
0 \\
0 \\
0 \\
0 \\
0 \\
0 \\
0 
\end{array}\right]
\right\}.
\end{equation*}
\end{tiny}
Notice that in this example there are six generators and eight inequalities.
\end{example}

Let $ \sigma \subseteq \mathsf{N}_{\mathbf{R}} $ be a cone and $ m \in \mathsf{M}$ be such that $\langle m , u \rangle \geq 0$ for every $ u \in \sigma $.
The subset $ \left\{ u \in \sigma \mid\langle m , u \rangle = 0 \right\}  \subseteq \sigma$ is a face of $\sigma$. A facet is a face of codimension one. Associated to $ \sigma $ we define another cone, its dual, as
\begin{equation*}
	\sigma^{\vee} = \left\{ m \in \mathsf{M}_{\mathbf{R}} ~|~\langle m , u \rangle \geq 0, \forall u\in \sigma \right\}.
\end{equation*}
The dual cone is again rational polyhedral.
From now on by cone $\sigma\subset \mathsf{N}_\mathbf{R}$ we mean a rational polyhedral cone which is also pointed.

\begin{example}
    With \cref{not:1}, the dual of a cone $\sigma=\operatorname{Cone}(\mathbf{A})$ is $\sigma^\vee=\operatorname{Cone}(\mathbf{B}^{\mathsf{T}})$. Hence, the dual $\sigma^\vee$ of the cone $\sigma$ in \cref{ex:running} is generated by the rows of the matrix defining the inequalities. 
\end{example}

\subsection{Unimodular equivalence}

The group $ \operatorname{Aut}(\mathsf{N}) \subseteq \operatorname{GL}(\mathsf{N}_{\mathbf{R}})$ is isomorphic to $\operatorname{GL}_n(\mathbf{Z})$ and its elements are called unimodular transformations. Most of the operations we are interested in are invariant under unimodular transformations. We define two cones $ \sigma $ and $ \sigma ' $ to be \textit{unimodularly equivalent}, if there exists a unimodular transformation $ \mathbf{U} $ such that $ \mathbf{U}\sigma = \sigma ' $. We denote this by $\sigma\simeq \sigma'$.
For instance, we have
\begin{equation}\label{eq:equivalence}
\operatorname{Cone}\left(
\left[\begin{array}{rrrrrr}
-2 & -1 & 5 & 0 & 0 & 5 \\
5 & 3 & 4 & -1 & 1 & 1 \\
1 & 2 & -1 & 1 & 4 & -2 \\
2 & -1 & 1 & 2 & 0 & -2
\end{array}\right]\right)
\simeq
\operatorname{Cone}\left(
\left[\begin{array}{rrrrrr}
1 & 0 & 5 & 3 & 4 & 1 \\
0 & 1 & 9 & 4 & 8 & 4 \\
0 & 0 & 24 & 2 & 5 & 17 \\
0 & 0 & 0 & 8 & 11 & -6
\end{array}\right]\right),
\end{equation}
given by the unimodular transformation represented by the matrix
\begin{equation*}
\mathbf{U}
=
\left[\begin{array}{rrrr}
1 & 0 & 1 & 1 \\
2 & 0 & 2 & 1 \\
4 & 1 & 1 & 1 \\
1 & -1 & 3 & 2
\end{array}\right].
\end{equation*}
The matrix on the right hand side of  \eqref{eq:equivalence} is the Hermite normal form of the matrix on the left hand side.

\begin{definition}
Let $ \mathbf{A} $ be a $ n' \times n $ matrix with integral entries.
We say that $ \mathbf{A} $ is in Hermite normal form if
\begin{enumerate}
	\item On each non-zero row the \emph{pivot}, first non-zero entry from left to right, is located to the right of the pivot of the row above.
	\item Each pivot is positive.
	\item Elements above each pivot are nonnegative and strictly smaller than the pivot.
\end{enumerate}
\end{definition}

This is the integral version of the notion of \emph{row-echelon} form in linear algebra over a field.
We can reduce an integer matrix $ \mathbf{A} $ to a Hermite normal form by doing elementary operations on the rows that do not change the absolute value of the determinant.
In matrix terms, we can find a $ n \times n $ matrix $ \mathbf{U} $ with determinant $ \pm 1 $ such that $ \mathbf{U} \mathbf{A} $ is in Hermite normal form.
Moreover, any two unimodularly equivalent matrices have the same Hermite normal form.

However, interchanging the \emph{columns} of $ \mathbf{A} $ changes the integral span of its rows, so it changes the Hermite normal form.
The cone generated by the columns is clearly independent of the order of its generators, so we cannot naively talk about the normal Hermite form of a cone of the form $ \operatorname{Cone}(\mathbf{A}) $.
To obtain a canonical form, one must choose a particular order of the generators (columns of the matrix).

\begin{remark}\label{Hermite factorial}
Theoretically, one could consider all $n!$ permutations of the columns, compute each corresponding Hermite normal form, and choose the lexicographically largest matrix. While this defines a canonical form, it also highlights a major computational drawback: a canonical form is only meaningful if it can be computed in a reasonable amount of time.
\end{remark}

\begin{definition}
A full dimensional cone $ \sigma \subseteq\mathsf{N}_{\mathbf{R}}  $ is unimodular if it is generated by a basis of the lattice $ \mathsf{N} $. Furthermore, we say that $\sigma$ is simplicial if it is generated by a basis of the vector space $ \mathsf{N}_\mathbf{R} $.
\end{definition}

If $\operatorname{Cone}(\mathbf{A})$ is unimodular and $\mathbf{A}$ contains a minimal set of generators, then the Hermite normal form of $\mathbf{A}$ is the identity.

\subsection{Affine semigroups}

An affine semigroup $ S \subseteq \mathsf{M}$ is a semigroup generated by finitely many elements. We say $ S $ is \emph{pointed} if for every nonzero element $ m \in S $ we have $ -m \notin S $. 

\begin{definition}
    We say a pointed affine semigroup $ S $ is unimodular if it is unimodularly equivalent to the affine semigroup $ \mathbf{Z}_{\geq 0} \left\{ \mathbf{e}_{1}, \dots, \mathbf{e}_{n} \right\} \subseteq \mathsf{M}$. In particular, an affine semigroup $S$ is unimodular if and only if $S=\sigma^\vee\cap \mathsf{M}$ for some unimodular cone $\sigma^\vee$.
\end{definition}

Every pointed affine semigroup has a unique minimal generating set $ \mathcal{H} $. This set can be characterized as the set of indecomposable elements, that is, $ h \in \mathcal{H} $ if and only if $ h $ cannot be written as the sum of two elements of $ S $. Without loss of generality, up to changing the lattice $\mathsf{M}$, we may and will always assume that $\mathbf{Z}S=\mathsf{M}$.

\begin{definition}
The minimal generating set of a pointed affine semigroup $S$ is called the Hilbert basis.
\end{definition}

An affine semigroup is \emph{saturated} if $ k\cdot m \in S$ implies that $ m \in S $.
Saturated affine semigroups are in correspondence with cones $\sigma^\vee\subset \mathsf{M}_{\mathbf{R}} $ by letting $ S $ be the semigroup $ \sigma^\vee\cap \mathsf{M} $. This correspondence maps pointed semigroups with pointed cones. When dealing with saturated pointed semigroups, we will often confuse the cone with the semigroup, and thus we can talk about the Hilbert basis of a pointed cone $\sigma^\vee\subset \mathsf{M}_\mathbf{R}$. 

\begin{remark}
Computing Hilbert bases is the most involved part in our computations. Theoretically, little is known about them, and computing the Hilbert basis of a pointed semigroup is a hard computational problem. However, due to their importance in  discrete optimization, there is available specialized software we can use to compute them. In particular, we use the software Normaliz \cite{Normaliz} to compute Hilbert bases of pointed cones.  
\end{remark}

\begin{example}
In the cone from Example \ref{ex:running} the Hilbert basis of the corresponding semigroup has 74 elements.
\end{example}

\section{Nash blowups of toric varieties}

We now briefly describe the Nash blowup of an affine variety over an algebraically closed field $\K$ of any characteristic. However, since our main interest relies on the computational aspects of Nash blowups of toric varieties, the details from algebraic geometry may be skipped and take \cref{def:nash_modified_nonnormal} as the definition of the Nash blowup of a toric variety and \cref{def:nash_modified} as the definition of normalized Nash blowup of a normal toric variety.

Let $X \subseteq \K^n$ be an equidimensional algebraic variety of dimension $d$, where $\K$ is an algebraically closed field. Consider the Gauss map:
\begin{align*}\label{eq:gauss}
\Phi\colon X \setminus \operatorname{Sing}(X) \to \operatorname{Grass}(d, n) \quad \text{defined by} \quad x \mapsto T_x X\,,
\end{align*}
where $\operatorname{Grass}(d, n)$ denotes the Grassmannian of $d$-dimensional vector spaces in $\K^n$, and $T_x X$ is the tangent space to $X$ at $x$. 
Let $X^*$ be the Zariski closure of the graph of $\Phi$ and $\nu\colon X^* \to X$ be the composition of the inclusion $X^*\hookrightarrow X\times\operatorname{Grass}(d, n)$ and the projection onto the first coordinate. The morphism $\nu$ is a proper birational map that is an isomorphism over $X \setminus \operatorname{Sing}(X)$. The pair $(X^*,\nu)$ is called the \emph{Nash blowup} of $X$. The composition $\eta\circ\nu$, where $\eta$ is the normalization map, is called the \emph{normalized Nash blowup} of $X$.

The \emph{Nash blowup conjecture} asks whether successive Nash blowups of a singular variety eventually produce a non-singular variety. It has been known since the 1970s that this conjecture has a negative answer in positive characteristic, and thus we restrict our attention to characteristic zero. Likewise, the \emph{normalized Nash blowup conjecture} asks whether iterating the normalized Nash blowup of a singular variety eventually yields a non-singular variety. Prior to our work, this conjecture remained open in all characteristics and in dimension three and higher.

\medskip

An \emph{affine toric variety} is an algebraic variety of the form $\operatorname{Spec}\K[S]$, where $S$ is a pointed affine semigroup and $\K[S]$ is the semigroup algebra
\[
\K[S]=\bigoplus_{u\in S}\K\cdot\chi^{u},\quad\mbox{with}\quad\chi^{0}=1,\mbox{ and }\chi^{u}\cdot\chi^{u'}=\chi^{u+u'},\ \forall u,u'\in S\,.
\]
With this definition, an affine toric variety is non-singular if and only if the corresponding semigroup is unimodular. Moreover, an affine toric variety is normal if and only if the corresponding semigroup in $\mathsf{M}$ is saturated and so corresponds to a pointed cone in $\mathsf{M}_\mathbf{R}$.

The Nash blowup of an affine toric variety can be described in purely combinatorial terms. This approach was originally introduced by G. González-Sprinberg for complex normal toric varieties \cite{GS1} and was later refined by removing the hypothesis of normality and for varieties over fields of arbitrary characteristic, by Grigoriev-Milman and González-Teissier, and Duarte-Jeffries-Núñez Betancourt, respectively \cite{GrMi12,GoTe14,DJN24}. 

The resulting combinatorial description is presented in the following subsections. We describe both the combinatorial algorithm for the Nash blowup and for the normalized Nash blowup. Moreover, we also discuss some subtleties regarding its computational implementation.

\subsection{Algorithm for the Nash blowup}

\begin{definition}[Nash blowup]
\label{def:nash_modified_nonnormal}
Let $ S \subseteq \mathsf{M}	$ be an affine semigroup and $ \mathbf{k} $ an algebraically closed field. The Nash blowup of $ S $ is the collection of affine semigroups obtained in the following process.

\begin{description} 
	\item[D1] Compute a Hilbert basis $ \mathcal{H} $ of $S$.
	\item[D2] Compute the set $ \mathscr{B} = \left\{ I \subseteq \mathcal{H} \mid I \text{ is a basis for }\mathsf{M}\otimes_\mathbf{Z}\mathbf{k}  \right\} $ of all linear bases contained in $\mathcal{H}$.
	\item[D3] For each $I \in \mathscr{B}$, we compute
    \[
    \mathcal{G}_I = \mathcal{H}\cup \bigcup_{h\in I} \mathcal{G}_I(h),\quad \text{where}\quad \mathcal{G}_I(h) := \big\{ 
    g-h \mid g\in \mathcal{H}\setminus I \text{ and } \left( I\cup \{g\} \right ) \setminus \{h\} \in \mathscr{B}
    \big\}.
    \]
    \item[D4] Return the collection of semigroups $S_I$ generated by the sets $\mathcal{G}_I$ that are pointed.
\end{description}
Given a collection of semigroups, the Nash blowup is the union of the Nash blowups of each of the semigroups.
\end{definition}

In algebraic geometry terms, the set of affine toric varieties $\operatorname{Spec}\mathbf{k}[S_I]$, with $S_I$ pointed, provides an affine cover of the Nash blowup of the toric variety $\operatorname{Spec}\mathbf{k}[S]$. 

\begin{remark}
In the first step, it is enough to take any set of generators of the semigroup instead of the Hilbert basis, but to make the second step more efficient it is worthwhile to take the Hilbert basis of the semigroup. The second step is a simple determinant computation modulo the characteristic of the base field. Since we are computing in low dimensions, the determinant computation is fast. It is worthwhile mentioning that it is only at this step that the base field intervene: as an output, we obtain a different set $\mathscr{B}$ for different characteristics.
	
The third step involves only basic operations, but the number of operations grows exponentially with the cardinality of $\mathcal{H}$. Finally, the fourth step can be easily achieved by checking if the cone generated by $\mathcal{G}_I$ is pointed. In this step, it is wise to  `clean' the generators $ \mathcal{G}_I $ of $S_I$ to obtain a Hilbert basis. This facilitates further iterations of Nash blowups as we already have a Hilbert basis as an input for the first step in the next iteration.
\end{remark}

We now work out a couple of examples in detail.

\begin{example}[{\cite[Example 1]{Nob75}}]
	\label{ex:nobile}
Let $ S \subseteq \mathbf{Z} $ be the the semigroup generated by $ \left\{ 2,3 \right\} $ and $\mathbf{k}$ an algebraically closed field of characteristic three. The semigroup algebra $ \mathbf{k}[S] $ is isomorphic to $ \mathbf{k}[x,y]/(x^3-y^2) $, the affine coordinate ring of a plane curve with a cusp singularity in the origin. Let us compute the Nash blowup of $S$. Only the singleton $I=\left\{ 2 \right\} $ is a basis for $\mathbf{Z}\otimes_\mathbf{Z}\mathbf{k}\simeq \mathbf{k} $. Its corresponding $\mathcal{G}_I$ is equal to $ \left\{ 2, 3 \right\} \cup \emptyset$ since we cannot replace $ 2	$ by $ 3 $ to obtain a basis. Hence the Nash blowup of $S$ is $\{S\}$. This example shows that iterating Nash blowup does not resolve the singularities in positive characteristic. 
\end{example}

Due to \cref{ex:nobile}, in the sequel, we only consider base fields $\K$ of characteristic zero when considering the Nash blowup of a semigroup.

\begin{example}[Whitney umbrella]
	\label{ex:unmbrella}
	Let $ S \subseteq \mathsf{M}$, with $\mathsf{M}=\mathbf{Z}^{2}$ be the semigroup generated by the columns of 
	\begin{equation*}
		\begin{bmatrix}
			1&1&0 \\
			1&0&2 
		\end{bmatrix}.
	\end{equation*}
	The semigroup algebra $ \mathbf{k}[S] $	is isomorphic to $ \mathbf{k}[x,y,z]/(x^2-y^2z) $, the affine coordinate ring of the algebraic variety usually known as the Whitney umbrella. Let us compute the Nash blowup of $S$. Every pair is a basis and we obtain the following
	\begin{equation*}
		\mathcal{G}_{12} =
		\begin{bmatrix}
			1& 1& 0&-1&-1 \\
			1& 0& 2& 2& 1 
		\end{bmatrix},
		\mathcal{G}_{13} =
		\begin{bmatrix}
			1& 1& 0& 0&-1 \\
			1& 0& 2&-1&-2
		\end{bmatrix},
		\mathcal{G}_{23} =
		\begin{bmatrix}
			1& 1& 0& 0& 1 \\
			1& 0& 2& 1&-1 
		\end{bmatrix},
	\end{equation*}
	The one in the middle is not pointed as it contains both $[0,2]^T$ and $ [0,-2]^T$.
	For the other two we can compute the Hilbert Basis by repeatedly discarding redundant elements, as semigroups we obtain
	\begin{equation*}
		\mathbf{Z}_{\geq0}\mathcal{G}_{12} = \mathbf{Z}_{\geq0}
		\begin{bmatrix}
			1&-1 \\
			0& 1 
		\end{bmatrix},\qquad
		\mathbf{Z}_{\geq0}\mathcal{G}_{23} = \mathbf{Z}_{\geq0}
		\begin{bmatrix}
			0& 1 \\
			1&-1 
		\end{bmatrix}.
	\end{equation*}
	both of which are unimodular semigroups.
	Since these two charts provide an affine cover of the Nash blowup of the Whitney umbrella, we conclude that the resulting Nash blowup is smooth.
\end{example}

To organize this process we consider an infinite directed graph (a digraph). We say that an affine semigroup is a \emph{child} of $S$ if it occurs in the result of a single Nash blowup on $S$.

\begin{definition}[Nash digraph]
	\label{def:nash_digraph}
The Nash digraph is the infinite directed graph $ \mathcal{S}(\mathsf{M}) $ whose vertex set is the set of all affine semigroups $S$ of full rank in $ \mathsf{M} $ modulo unimodular equivalence,
and there is a directed edge from $S$ to each of its children.
\end{definition}

Up to unimodular equivalence, there is a unique unimodular semigroup that we denote by $\epsilon$.

\begin{conjecture}[Nash blowup conjecture]\label{conj:tree}
Every directed path in $ \mathcal{S}(\mathsf{M})$ eventually reaches $ \epsilon$.
\end{conjecture}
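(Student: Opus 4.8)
The plan is to prove termination by a well-founded descent argument on $\mathcal{S}(\mathsf{M})$. I would seek a complexity function $\mu$ on affine semigroups of full rank in $\mathsf{M}$ that is invariant under unimodular equivalence — so that it is well defined on the vertices of $\mathcal{S}(\mathsf{M})$ — takes values in $\mathbf{Z}_{\geq 0}$ (or, if a single integer proves insufficient, in a well-ordered set of tuples ordered lexicographically), and satisfies $\mu(S)=0$ if and only if $S\simeq\epsilon$. If, in addition, $\mu$ strictly decreases along every directed edge, i.e.\ $\mu(S_I)<\mu(S)$ for every pointed child $S_I$ produced by steps D1--D4 whenever $S\not\simeq\epsilon$, then no directed path can be infinite: each path is a strictly descending chain in a well-order and therefore terminates at a vertex with $\mu=0$, which is $\epsilon$. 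The entire conjecture thus reduces to the construction of one such monovariant.

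For the monovariant I would first try $\mu(S)=|\mathcal{H}|-n$, the number of Hilbert basis elements in excess of the rank. Under the standing assumption $\mathbf{Z}S=\mathsf{M}$ this is a non-negative integer, it is manifestly invariant under unimodular equivalence, and it vanishes exactly when the Hilbert basis is a lattice basis, i.e.\ when $S\simeq\epsilon$. A natural refinement, useful if the cardinality alone is not strictly decreasing, is the lexicographic pair $(\operatorname{mult}(S),\,|\mathcal{H}|)$, where $\operatorname{mult}(S)$ is the normalized lattice multiplicity of the cone $\operatorname{Cone}(\mathcal{H})$ (its covolume with respect to $\mathsf{M}$); this second choice is especially suited to the normalized variant, where each child is replaced by its saturation and $\operatorname{mult}$ is precisely the lattice index measuring the failure of unimodularity.

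The core of the proof is a local analysis of step D3. Fixing a basis $I\in\mathscr{B}$, one must understand how passing from $\mathcal{H}$ to $\mathcal{G}_I=\mathcal{H}\cup\bigcup_{h\in I}\mathcal{G}_I(h)$ affects $\mu$. The new generators $g-h$, governed by the exchange condition $(I\cup\{g\})\setminus\{h\}\in\mathscr{B}$, should be read as recording the linear dependencies among Hilbert basis elements relative to $I$; the expectation is that on each pointed chart the semigroup $S_I$ is strictly closer to unimodular. The base case is the fixed point: when $S\simeq\epsilon$ no exchange $g\mapsto g-h$ is admissible, so the only child is $S$ itself and the process has already halted, exactly as in \cref{ex:nobile} and \cref{ex:unmbrella}. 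The inductive step is the claim that, for $S\not\simeq\epsilon$, every pointed child $S_I$ satisfies $\mu(S_I)<\mu(S)$.

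The hard part will be this strict-decrease step for general $n$. Nash blowup is notoriously non-monotone: the sets $\mathcal{G}_I(h)$ may enlarge a Hilbert basis rather than shrink it, and the multiplicity can stay constant on individual charts, so neither candidate is obviously a monovariant. In dimension two the picture is manageable — the admissible exchanges are forced by the single linear relation among consecutive Hilbert basis elements, and one can verify that the multiplicity strictly drops — which suggests first completing the proof for $n=2$ and then inducting on $n$ by restricting to the facets of $\operatorname{Cone}(\mathcal{H})$. The real obstruction is controlling the interaction of the different sets $\mathcal{G}_I(h)$ as $I$ ranges over $\mathscr{B}$: the argument succeeds only if one can show that the chart-by-chart decreases cannot offset one another so as to leave some complexity invariant, equivalently that $\mathcal{S}(\mathsf{M})$ admits no directed cycle. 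Pinning down an invariant robust enough to exclude this possibility is the crux on which the whole approach depends.
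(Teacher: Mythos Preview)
Your proposal attempts to \emph{prove} \cref{conj:tree}, but the paper does not prove this conjecture --- it \emph{disproves} it. Concretely, the paper exhibits an affine semigroup $S\subset\mathbf{Z}^3$ (Hilbert basis given by the columns of a specific $3\times 6$ matrix) that is a child of a child of itself, so $\mathcal{S}(\mathbf{Z}^3)$ contains a directed cycle of length~$2$. Any monovariant $\mu$ with the properties you list would forbid such a cycle, since along it $\mu$ would have to strictly decrease and return to its starting value. Hence no such $\mu$ exists, and the entire descent strategy is blocked in dimension $n\geq 3$.

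You in fact identified the crux yourself in your last paragraph: the argument ``succeeds only if one can show \dots\ that $\mathcal{S}(\mathsf{M})$ admits no directed cycle.'' The paper's contribution is precisely a computer-found cycle, so that hypothesis is false. The only regime in which your outline could still be relevant is $n=2$, where the conjecture remains open; there the search for a monovariant is reasonable, but even in that case the paper reports that every proposed invariant in the literature has eventually been refuted by a counterexample, so the ``hard part'' you flagged is genuinely hard and not merely a technicality.
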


\subsection{Normalized Nash blowup}
We now describe the normalized Nash blowup of the normal toric variety corresponding to the pointed saturated semigroup $S$ so that $S=\sigma^\vee\cap \mathsf{M}$ for some pointed cone $\sigma^\vee\subset \mathsf{M}_\mathbf{R}$. We need one more definition.
\begin{definition}
	\label{def:feasible_cone}
	Let $ \mathbf{v} \in \mathsf{P}$ be a vertex of a polyhedron.
	We define the \emph{feasible cone at $\mathbf{v}$} as 
	\begin{align*}
		\operatorname{fcone}(\mathbf{v}, \mathsf{P}) &= \operatorname{Cone}\left(\left\{ e \mid e \text{ is a direction of an edge adjacent to }\mathbf{v}\text{ pointing away from it} \right\}\right) \\
        &=\operatorname{Cone}\left(\mathsf{P}-\mathbf{v}\right)\,.
	\end{align*}
\end{definition}
The translation $ \mathbf{v} + \operatorname{fcone}(\mathbf{v}, \mathrm{P}) $ is called the tangent cone.
Feasible cones are dual to the more familiar normal cones. Now we are ready to state the normalized Nash blowup.

\begin{definition}[Normalized Nash blowup]
\label{def:nash_modified}
Let $ \sigma^\vee\subset \mathsf{M}_\mathbf{R}$ be a pointed cone and $ \mathbf{k} $ an algebraically closed field. The normalized Nash blowup of $\sigma^\vee$ is the collection of pointed cones obtained in the following process.

\begin{description} 
\item[N1] Compute a Hilbert basis $ \mathcal{H} $ of $S = \sigma^\vee\cap \mathsf{M}$.
\item[N2] Compute the set $\displaystyle \mathcal{B} = \left\{ \sum_{h \in B} h \mid B \subseteq \mathcal{H} \text{ is a basis for }\mathsf{M}\otimes_\mathbf{Z}\mathbf{k}  \right\} $.
\item[N3] Return the set of feasible cones of the vertices of the unbounded polyhedron $\operatorname{Conv}(\mathcal{B}) + \sigma^\vee$.
\end{description}
Given a collection of cones, the normalized Nash blowup is the union of the normalized Nash blowups of each of the cones.
\end{definition}

\begin{remark}
Let us highlight some computational aspects. The bottleneck continues to be computing the Hilbert basis in the first step. The second step is a simple determinant computation modulo the characteristic of the base field and again it is only at this step that the characteristic of the base field intervene. For the third step, computing feasible cones, convex hulls and Minkowski sums of polyhedra in lower dimensions is quite efficient.
\end{remark}

Similar to \cref{def:nash_digraph}, we organize this process in a digraph. In this case we need to consider the dependence on the characteristic of the base field $\K$. We say that a cone is a child of $\sigma^\vee$ if it occurs in the result of a single Nash blowup on $\sigma^\vee$, this notion depends on the characteristic of the base field $\K$.

\begin{definition}[Normalized Nash digraph]
	\label{def:normalized_nash_digraph}
The normalized Nash digraph of characterisitic $p$ 
is the infinite directed graph $ \mathcal{K}_{p}(\mathsf{M})$ whose vertex set is the set of all pointed full dimensional polyhedral cones in $\mathsf{M}_{\mathbf{R}} $ modulo unimodular equivalence, and there is an edge from a cone to each of its children.
\end{definition}

Up to unimodular equivalence, there is a unique unimodular cone that we denote by $\epsilon$.

\begin{conjecture}[Normalized Nash blowup conjecture]
\label{conj:normalized_tree}
Every directed path $ \mathcal{K}_{p}(\mathsf{M}) $ eventually reaches $ \epsilon$.
\end{conjecture}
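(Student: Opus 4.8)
The plan is to prove \cref{conj:normalized_tree} by exhibiting a \emph{monovariant}: a complexity function $\mu$ on the vertices of the normalized Nash digraph that is invariant under unimodular equivalence, takes values in a well-ordered set, vanishes exactly at $\epsilon$, and \emph{strictly decreases along every directed edge leaving a non-terminal vertex}. Once such a $\mu$ is in hand the conclusion is immediate: a directed path $\sigma_0^\vee\to\sigma_1^\vee\to\cdots$ produces a strictly decreasing sequence $\mu(\sigma_0^\vee)>\mu(\sigma_1^\vee)>\cdots$, which by well-foundedness must be finite and can only terminate at a vertex with $\mu=0$, namely $\epsilon$. Thus the whole statement reduces to constructing $\mu$ and proving the strict-decrease property.

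For the complexity I would use the \emph{normalized volume} (multiplicity). Writing $\sigma\subseteq\mathsf{N}_\mathbf{R}$ for the cone dual to $\sigma^\vee$, and recalling that for a simplicial cone $\operatorname{mult}$ is the lattice index of the sublattice spanned by its primitive generators, extend $\operatorname{mult}$ to any pointed full-dimensional cone by summing over a lattice triangulation; this is well defined because normalized volume is additive under subdivision. Set $\mu(\sigma^\vee)=\operatorname{mult}(\sigma)-1\in\mathbf{Z}_{\geq 0}$, refined lexicographically by the Hilbert-basis excess $|\mathcal{H}|-n$ to break ties. Since a full-dimensional pointed cone has multiplicity $1$ if and only if it is unimodular, and $\sigma$ is unimodular exactly when $\sigma^\vee$ is, we get $\mu(\sigma^\vee)=0$ if and only if $\sigma^\vee=\epsilon$; the invariance of $\mu$ under unimodular transformations is manifest.

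The structural input comes from N1--N3 of \cref{def:nash_modified}. The children of $\sigma^\vee$ are the feasible cones (\cref{def:feasible_cone}) at the vertices of $P=\operatorname{Conv}(\mathcal{B})+\sigma^\vee$, where $\mathcal{B}$ collects the basis-sums $\sum_{h\in B}h$ over the lattice bases $B\subseteq\mathcal{H}$. The first thing I would establish is that the duals of these children form a subdivision of $\sigma$, so each normalized Nash blowup is a genuine proper birational modification. The second is \emph{nontriviality}: if $\sigma^\vee$ is not unimodular then $\mathcal{H}$ either contains more than $n$ indecomposable elements or one whose spanned sublattice has index $>1$, and in either case $\mathcal{B}$ has at least two distinct points, so $P$ has at least two vertices and the subdivision is proper. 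Additivity of multiplicity over this subdivision then gives $\sum_{\text{children }\tau^\vee}\operatorname{mult}(\tau)=\operatorname{mult}(\sigma)$ with at least two summands, each $\geq 1$; hence every child satisfies $\operatorname{mult}(\tau)<\operatorname{mult}(\sigma)$, and when $\sigma$ is simplicial $\operatorname{mult}(\tau^\vee)=\operatorname{mult}(\tau)$, so $\mu$ strictly drops.

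The hard part — and the genuine obstacle to completing the argument — is upgrading this into a strict decrease of $\mu$ along \emph{every} edge for \emph{all} cones, not merely simplicial ones. Additivity controls the \emph{sum} over children but not the individual values in the non-simplicial case, where a single child can inherit almost all of the parent's volume and where the multiplicity drop and the Hilbert-basis excess $|\mathcal{H}|-n$ interact delicately, the latter possibly increasing when the former stalls. Concretely, one must rule out that some non-unimodular cone admits a child unimodularly equivalent to it, or to a cone of equal complexity, which would freeze the monovariant. Making the decrease uniform appears to require a finer analysis of how the vertices of $P$ — equivalently the logarithmic Jacobian data recorded by $\mathcal{B}$ — push the newly introduced rays strictly into the interior and cut the normalized volume; it is exactly here that the dimension enters, that the confirmed positive behaviour in dimensions two and three sits, and that families with pathological lattice-point distribution such as the Reeves cones must be handled. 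I would therefore devote the remaining effort to establishing this uniform strict-decrease lemma, since it is the only step not already furnished by the combinatorial setup above.
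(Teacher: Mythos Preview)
The statement you are attempting to prove is \emph{false}: the paper does not prove \cref{conj:normalized_tree} but rather \emph{disproves} it in dimension four and higher. Concretely, the cone $\sigma^\vee\subset\mathbf{R}^4$ generated by the columns of the matrix $\mathbf{B}$ in \eqref{eq:count-char-0} is unimodularly equivalent to one of its own children, producing a loop in $\mathcal{K}_0(\mathbf{Z}^4)$. Hence the very scenario you flag as needing to be ``ruled out'' --- a non-unimodular cone admitting a child unimodularly equivalent to itself --- actually occurs, and no monovariant of the kind you propose can exist in general. The step you identify as ``the genuine obstacle'' is not merely hard; it is provably impossible.

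Your argument also contains an independent technical error that would invalidate it even in the dimensions where the conjecture remains open. The multiplicity $\operatorname{mult}(\sigma)$ (the lattice index of the primitive ray generators, for simplicial $\sigma$) is \emph{not} additive under subdivision. For instance, the unimodular cone generated by $e_1,e_2$ in $\mathbf{Z}^2$ has multiplicity $1$, yet its stellar subdivision along $e_1+e_2$ produces two unimodular cones whose multiplicities sum to $2$. Consequently your extension of $\operatorname{mult}$ to non-simplicial cones by ``summing over a lattice triangulation'' is not well-defined, and the identity $\sum_{\tau}\operatorname{mult}(\tau)=\operatorname{mult}(\sigma)$ on which your descent relies does not hold. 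This is precisely why, as the paper notes, no satisfactory invariant that improves under normalized Nash blowup is currently known even in dimension three.
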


As stated in \cite{Ataetal}, \cref{conj:tree} and \cref {conj:normalized_tree} could fail in two possible ways: either with the existence of an infinite path that never reaches $\epsilon$, or with the existence of a directed cycle. We disprove both these conjectures by finding directed cycles in both digraphs.

\section{Implementation and computational results}

In this section, we present the computational implementation that led us to the counterexamples to the Nash conjecture and the normalized Nash conjecture, first introduced in \cite{CDLAL, CDLL3d}. The SageMath code necessary to reproduce the results is available at \cite{github}. 

The exposition in \cite{CDLAL, CDLL3d} focuses exclusively on providing proofs of the counterexamples without the use of computer assistance. In contrast, in this paper we present results that can only be established through computational methods. Specifically, we provide new counterexamples to the normalized Nash blowup conjecture consisting of simplicial toric varieties, toric hypersurfaces, and $\mathbf{Q}$-Gorenstein toric singularities. Moreover, the only remaining open case of the Nash blowup conjecture is in dimension two, while for the normalized Nash blowup conjecture it is in dimension three. We provide positive computational evidence supporting these conjectures in the toric setting.

\medskip

We will represent the vertices of the digraphs $\mathcal{S}(\mathsf{M})$ and $ \mathcal{K}_p(\mathsf{M}) $ with elements in $ \operatorname{Mat}_{n\ast}( \mathbf{Z})$, the set of integer matrices with $n$ rows and a variable number of columns greater or equal to $n$. For a cone $\sigma^\vee\in \mathcal{K}_p(\mathsf{M})$, we take the ray generators of the cone as columns  and for a semigroup $S\in \mathcal{S}(\mathsf{M})$, we take its Hilbert basis as columns as well.

To deal with unimodular equivalence we use the Hermite normal form of the matrix containing the  primitive ray generators of a cone. However, we are still faced with the problem of efficiently computing a unique Hermite normal form up to reordering of the rays of a cone (see Remark \ref{Hermite factorial}). To address this, we apply an algorithm developed by Kreuzer and Skarke, implemented in their software PALP \cite{kreuzer2004palp}, which efficiently produces a canonical form of polytopes (see also \cite[Appendix A]{grinis2013normal} for a detailed explanation of the algorithm). Their method exploits the vertex–facet incidence graph to determine a canonical ordering of the vertices in a systematic way. This approach can be readily adapted to the case of pointed polyhedral cones.
The algorithm was previously implemented for polytopes in SageMath, and we have extended this implementation to handle cones.

\begin{definition}[PALP normal form]\label{def:representative}\
\begin{enumerate}
\item Let $\sigma^\vee \subset \mathsf{M}_\mathbf{R}$ be a pointed full-dimensional cone. Its \emph{PALP normal form} is the matrix $\mathbf{A}$ obtained as the Hermite normal form of the matrix whose columns are the primitive generators of the rays of $\sigma^\vee$, arranged in the unique order prescribed by PALP.
    
\item  Let $S\subset\mathsf{M}$ be an non-necessarily saturated affine semigroup. The convex hull of $S$ is a cone $\sigma^\vee\subset \mathsf{M}_\mathbf{R}$ and so it has a unique PALP normal form and there is an unimodular transformation $\mathbf{U}$ sending $\sigma^\vee$ to its PALP normal form. Let $\mathcal{H}$ be the Hilbert basis of $S$. A \emph{PALP normal form} of $S$ is the matrix $\mathbf{A}$ whose columns are the elements of $\{ \mathbf{U}h\mid h\in\mathcal{H}\}$ ordered lexicographically.
\end{enumerate}
\end{definition}

\begin{example}
For the cone $\sigma^\vee$ of Example \ref{ex:running}, its PALP normal form is
\begin{equation*}
\mathbf{A}=\left[\begin{array}{rrrrrrr}
1 & 0 & 3 & 33 & -26 & 32 \\
0 & 1 & 4 & 22 & -18 & 18 \\
0 & 0 & 5 & 34 & -28 & 30 \\
0 & 0 & 0 & 49 & -37 & 51
\end{array}\right].
\end{equation*}
which is obtained from the Hermite normal form of the matrix obtained from $ \mathbf{A} $ by the permutation of the columns whose cycle notation is $ (1,6,3)(2,4,5) $.

Let $S$ be the non-saturated affine semigroup generated by the columns of $\mathbf{A}$. Let us find a PALP normal form of $S$. Firstly, the columns of $\mathbf{A}$ is the Hilbert basis of $S$. Since $\mathbf{A}$ is  the PALP normal form of the convex hull $\sigma^\vee$ of $S$, the desired unimodular transformation $\mathbf{U}$ in \cref{def:representative}~(2)  is the identity matrix. Hence, it only remains to order the columns of $\mathbf{A}$ using the lexicographic order. We conclude that the PALP normal form of $S$ is the matrix 
\begin{equation*}
 \left[\begin{array}{rrrrrrr}
-26  & 0 & 1 &3 &  32 &33 \\
-18 & 1 & 0 & 4 &  18 &22 \\
-28 & 0 & 0 & 5 & 30 & 34\\
-37 & 0 & 0 & 0 &  51 & 49
\end{array}\right].
\end{equation*}
\end{example}

\begin{remark}
The PALP normal form in \cref{def:representative} is unique only for cones. For a semigroup $S$, uniqueness no longer holds (\cref{ex:full-disclosure}). However, dealing with this lack of uniqueness does not significantly increase the computational complexity. This is due to the fact that full-dimensional cones seldom  have automorphisms, and even when they do the automorphism group is finite.
\end{remark}

\begin{example} \label{ex:full-disclosure}
Let $ S $ and $S'$ be the semigroups in $\mathbf{Z}^{2} $  generated by the columns of the matrices $\mathbf{A}$ and $\mathbf{A}'$, respectively, where
\begin{equation*}
	\mathbf{A}=\begin{bmatrix}
		0&1&1 \\
		2&0&1 
	\end{bmatrix},
    \quad\mbox{and}\quad
	\mathbf{A}'=\begin{bmatrix}
		0&1&2 \\
		1&1&0 
	\end{bmatrix}.
\end{equation*}
The cone $\sigma^\vee$ generated by both semigroups is the first quadrant that is already in PALP normal form. Hence in both cases the unimodular matrix in \cref{def:representative}~(2) is the identity. This yields that the chosen representatives of $S$ and $S'$ are their Hilbert bases, given by the columns of $\mathbf{A}$ and $\mathbf{A}'$, respectively, which are already in lexicographical order. Hence, we obtain that $S$ and $S'$ are both representatives of the same semigroup corresponding to the Whitney umbrella in \cref{ex:unmbrella} (actually, $S\cong S'$ via the only non-trivial isomorphism of $\sigma^\vee$ that corresponds to exchanging the first and second coordinates).
\end{example}

\begin{definition}    
We represent both the Nash digraph $\mathcal{S}(\mathsf{M})$ and the normalized Nash digraph $\mathcal{K}_p(\mathsf{M})$, by a function $\chi$ that assigns to each matrix $\mathbf{A} \in   \operatorname{Mat}_{n\ast}( \mathbf{Z} )$ corresponding to a cone or semigroup in its PALP normal form the \emph{set} of its children in PALP normal form. 
\end{definition}

\begin{example}
For example, for the normalized case we have
\begin{equation}
\chi \left(
	\left[
\begin{array}{rr}
1 & 3\\ 
0 & 5
\end{array}
	\right] \right)
=
\left\{ 
\left[
\begin{array}{rr}
	1 & 0\\
  0 & 1
\end{array}
	\right],
\left[
\begin{array}{rr}
	1 & 1\\
  0 & 3
\end{array}
	\right]
\right\}
\end{equation}
\end{example}

We now focus on how to compute the normalized Nash digraph $\mathcal{K}_{0}(\mathsf{M})$, i.e., in the case of characteristic zero. The cases of the digraphs $\mathcal{K}_{p}(\mathsf{M})$ and $\mathcal{S}(\mathsf{M})$  are analogous. We keep the partial knowledge of the digraph with two global sets.

\begin{enumerate}
  \item A set \texttt{VERTICES} of cones, each represented by its normal form in $ \operatorname{Mat}_{n\ast}( \mathbf{Z} ) $.
	\item A set \texttt{EDGES} of edges, given as a set of ordered pairs of cones.
\end{enumerate}
We can initialize the digraph with the single vertex containing the identity matrix of dimension $n$ representing $ \epsilon $, the unimodular cone and an edge to itself, i.e., a single loop. By repeated applications of normalized Nash blowup, we add a vertex with all of its descendants and the corresponding edges to the partial knowledge of the digraph.

The Nash digraph $ \mathcal{K}_{0}(\mathsf{M}) $ is infinite, but, assuming \cref{conj:normalized_tree}, the subgraph composed by all the descendants of a single element is finite. Under this assumption, we can compute this subgraph using breadth-first search.

\begin{algorithm}
\caption{Resolution subgraph of a cone}\label{algo:res_tree}
\KwIn{A cone $ \sigma^\vee $}
\KwOut{The whole subgraph of descendants of $ \sigma^\vee $ .}

Initialize queue $Q$ as empty\;
$ A \leftarrow \mathrm{NormalForm}(\sigma) $\;
\If {$ A \in \texttt{VERTICES} $}{
	exit\;
}
\Else {
	$Q.enqueue(A)$\;
}

\While{$Q$ is not empty}{
    $B \leftarrow Q.dequeue()$\;
		\If {$ B \in \texttt{VERTICES} $}{
			continue\;
		}
		$\texttt{VERTICES}.add(B)$\;
    \For{each vertex $C \in \chi( B )$}{
				$\texttt{EDGES}.add(B, C ) $\;
				$Q.enqueue(C)$\;
    }
}
\end{algorithm}

There is no guarantee that \cref{algo:res_tree} ends. If it does not end then the cone $\sigma^\vee$ is a counterexample to the normalized Nash conjecture in characteristic zero. If it does end, it updates the known part of $  \mathcal{K}_{0}(\mathsf{M})$ with all the descendants of $ \sigma^\vee$. Storing the portions of the digraph that have already been computed accelerates the overall computation by eliminating the need to recompute the corresponding normalized Nash blowups.

\subsection{Counterexample to Nash blowup conjecture in dimension 3 and higher}

We begin by reporting on our results for the non normalized Nash blowup conjecture. 

We populated the digraph $\mathcal{S}(\mathsf{M})$ with $\mathsf{M}=\mathbf{Z}^3$ with 250 thousand vertices by running \cref{algo:res_tree} adapted to this case and we investigated the existence of cycles in it. To look for cycles inside a digraph, we use a standard depth-first search algorithm. We now report on the results we can extract from this partial knowledge of $\mathcal{S}(\mathsf{M})$.

Let $S$ be the semigroup whose Hilbert basis is given by the  columns of the matrix
$$
\left[\begin{array}{rrrrrr}
1 & 0 & 0 & -2 & 1 & 2 \\
0 & 1 & 0 & -1 & -1 & -2 \\
0 & 0 & 1 & 2 & 1 & 1
\end{array}\right]\,.
$$
We have that $S$ is a child of a child of itself. This proves that the digraph $\mathcal{S}(\mathsf{M})$ with $\mathsf{M}=\mathbf{Z}^3$ contains a cycle of length 2. Hence, disproving the Nash blowup conjecture (\cref{conj:tree}) in dimension 3 and higher. Indeed, a counterexample to the Nash blowup conjecture in dimension $n>3$ can be obtained from $S$ simply by taking the cartesian product of $S$ with the unimodular semigroup of dimension $n-3$. In \cite{CDLL3d}, the authors of this paper gave a proof of this fact that requires no computer assistance. 

This counterexample was obtained in the third iteration of the Nash blowup algorithm of the semigroup  $S$  whose Hilbert basis corresponds to the columns of the matrix
$$
\left[\begin{array}{rrrr}
1 & 0 & 0 & 1  \\
0 & 1 & 0 & 1  \\
0 & 0 & 1 & -6 
\end{array}\right].
$$

We have also been able to find other 3-dimensional counterexamples to the Nash blowup conjecture. For instance, the semigroup $S$ whose Hilbert basis is given by the columns of the matrix
$$
\left[\begin{array}{rrrrrr}
1 & 0 & 0 & 1 & 1 & 1 \\
0 & 1 & 1 & 2 & 2 & 2 \\
0 & 0 & 2 & -1 & 1 & 3
\end{array}\right]
$$
is one of the vertices of a cycle of length 4 in the digraph $\mathcal{S}(\mathsf{M})$. Up so far, we have been able to isolate cycles of lengths in the set $\{2,3,4,5,6,7\}$  inside $\mathcal{S}(\mathsf{M})$. The only limitation to find such cycles seems to be the computational power required to find longer cycles inside a large digraph.

\subsection{Counterexamples to normalized Nash blowup in dimension 4 and higher}

We focus our presentation in the case of characteristic zero. We populated the digraph $\mathcal{K}_0(\mathsf{M})$ with $\mathsf{M}=\mathbf{Z}^4$ with 1.5 million vertices by running \cref{algo:res_tree} and we investigated the existence of cycles in it. We now report on the results we can extract from this partial knowledge of $\mathcal{K}_0(\mathsf{M})$.

Let $\sigma^\vee$ be the cone in $\mathsf{M}_\mathbf{R}$ generated by the  columns of the matrix
\begin{align} \label{eq:count-char-0}
\mathbf{B} = \left[\begin{array}{rrrrrr}
1 & 0 & 0 & 0 & 2 & 1 \\
0 & 1 & 0 & 0 & 3 & 3 \\
0 & 0 & 1 & 0 & -2 & -1 \\
0 & 0 & 0 & 1 & -1 & -1
\end{array}\right].
\end{align}
We have that $\sigma^\vee$ is a child of itself. This proves that the digraph $\mathcal{K}_0(\mathsf{M})$ with $\mathsf{M}=\mathbf{Z}^4$ contains a cycle of length 1 (a loop). Hence, disproving the normalized Nash blowup conjecture (\cref{conj:normalized_tree}) in dimension 4 and higher. Indeed, a counterexample to the normalized Nash blowup conjecture in dimension $n>4$ can be obtained from $\sigma^\vee$ simply by taking the cartesian product of $\sigma^\vee$ with the unimodular cone of dimension $n-4$. In \cite{CDLAL}, the authors of this paper gave a proof of this fact that requires no computer assistance.

\begin{definition}\label{def:reeves}
    Let $n,j$ be positive integers.
    The Reeves cone $\rho_n(j)$ is the cone generated in $\mathsf{M}_\mathbf{R}$ with $\mathsf{M}=\mathbf{Z}^n$ by the columns of the $n\times n$ matrix 
    \[
\begin{bmatrix}
1 & 0 & \cdots & 0 & 1 \\
0 & 1 & \cdots & 0 & 1 \\
\vdots & \vdots & \ddots & \vdots & \vdots \\
0 & 0 & \cdots & 1 & 1 \\
0 & 0 & \cdots & 0 & j
\end{bmatrix}
\]
\end{definition}

The counterexample in \cite{CDLAL} was obtained in the seventh iteration of the Nash blowup algorithm of the Reeves cone $\rho_4(5)$. The \emph{index} of a rational polyhedral cone $\sigma^\vee \subset \mathsf{M}_\mathbf{R}$ is the index of the sublattice generated by the primitive ray generators of $\sigma^\vee$ inside the ambient lattice $\mathsf{M}$. By exhaustion of cases, we can can prove the following result.

\begin{proposition}\label{prop:reeves_45}
The cone $\rho_4(5)$ is the only simplicial cone with index less than 6 that is not  resolved by the iteration of normalized Nash blowup. 
\end{proposition}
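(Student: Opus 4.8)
The plan is to carry out a finite, fully explicit enumeration. First I would set up the classification of simplicial cones $\sigma^\vee\subset\mathsf{M}_\mathbf{R}$ with $\mathsf{M}=\mathbf{Z}^4$ up to unimodular equivalence, organized by index. A full-dimensional simplicial cone is generated by four primitive vectors spanning a sublattice of index $d$, and after applying a unimodular transformation we may assume the generating matrix is in Hermite normal form; the diagonal entries are positive, multiply to $d$, and the off-diagonal entries above each pivot may be reduced modulo that pivot. For $d\in\{1,2,3,4,5\}$ there are only finitely many such Hermite normal forms, and after further quotienting by the residual symmetry (permutations of the rays, which act on the Hermite data) one is left with a short, completely enumerable list of representatives. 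The case $d=1$ gives only $\epsilon$, which is already smooth, so the work is in $d\in\{2,3,4,5\}$.

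Next, for each representative cone $\sigma^\vee$ on this list I would run the normalized Nash blowup algorithm of \cref{def:nash_modified} (in characteristic zero, per the convention fixed after \cref{ex:nobile}), iterating it and following every directed path in $\mathcal{K}_0(\mathsf{M})$ emanating from $\sigma^\vee$. Concretely this means invoking \cref{algo:res_tree}: compute the Hilbert basis of $\sigma^\vee\cap\mathsf{M}$, form the set $\mathcal{B}$ of sums over lattice bases contained in it, take the feasible cones at the vertices of $\operatorname{Conv}(\mathcal{B})+\sigma^\vee$, reduce each child to its PALP normal form, and recurse. For all of these cones except $\rho_4(5)$ I expect \cref{algo:res_tree} to terminate, reaching $\epsilon$ on every branch — which proves that cone is resolved by iterated normalized Nash blowup. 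For $\rho_4(5)$ itself, resolution fails by the loop exhibited in \eqref{eq:count-char-0} together with the fact (discussed above, from \cite{CDLAL}) that the seventh iterate of $\rho_4(5)$ is the cone $\sigma^\vee$ of \eqref{eq:count-char-0}; since $\rho_4(5)$ has index $5<6$ and is simplicial, this produces the claimed unique exception. One should also check directly that $\rho_4(5)$ is genuinely singular (its semigroup is not unimodular), so that it is not vacuously "resolved."

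A subtlety worth addressing is that termination of \cref{algo:res_tree} on a given branch is only guaranteed \emph{a posteriori}: the algorithm either halts with a finite resolution subgraph or runs forever. So the rigorous content of the exhaustion is that, for each of the finitely many index-$<6$ simplicial representatives other than $\rho_4(5)$, the computation \emph{did} halt, and the resulting finite subgraph of $\mathcal{K}_0(\mathsf{M})$ has all leaves equal to $\epsilon$ and contains no directed cycle; this is a finite, checkable certificate. I would also note the reduction that keeps the list genuinely finite at each index: although the intermediate cones produced along the way can have larger index than the starting cone, one only needs to certify the finitely many \emph{starting} cones of index $<6$, and each such certification is a single terminating run.

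The main obstacle is not conceptual but computational bookkeeping: ensuring the enumeration of index-$<6$ simplicial cones up to unimodular equivalence is complete (no Hermite normal form omitted, correct handling of the ray-permutation symmetry so that no inequivalent cone is accidentally identified and no equivalent ones double-counted), and that the PALP normal form is correctly canonicalizing the intermediate vertices so that cycle detection via depth-first search is sound. Once the list is verified complete and each run is confirmed to terminate at $\epsilon$, the proposition follows immediately.
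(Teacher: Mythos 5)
Your proposal is correct and follows essentially the same route as the paper: a finite enumeration of the simplicial cones of index less than $6$ up to unimodular equivalence, followed by running the normalized Nash blowup iteration on each and observing that every one terminates at $\epsilon$ except $\rho_4(5)$, whose failure is certified by the loop at the cone of \eqref{eq:count-char-0}. The only difference is that the paper takes the enumeration ready-made from \cite[Table 4]{Ataetal} rather than re-deriving it via Hermite normal forms, which is an implementation detail rather than a different argument.
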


\begin{proof}
The list of simplicial cones with index less than 6 is in \cite[Table 4]{Ataetal}.
All of them are resolved, except for $\rho_4(5)$ which has as a descendant the cone generated by the columns of $\mathbf{B}$ in \eqref{eq:count-char-0}.
\end{proof}

\begin{remark} \
\begin{enumerate}
    \item The cone generated by the columns of the matrix $\mathbf{B}$ in \eqref{eq:count-char-0} \textbf{is the only} cone leading to a cycle of any length that we have been able to find in $\mathcal{K}_0(\mathsf{M})$ despite having already computed 1.5 million vertices of said digraph. This is in stark contrast with the case of $\mathcal{S}(\mathbf{Z}^3)$ and $\mathcal{K}_0(\mathbf{Z}^5)$ that we report below where we have been able to identify multiple disjoint loops. 
    
    \item The cones $\rho_4(j)$ do get resolved for $j \in \{6,7,8\}$. The cone $\rho_4(9)$ is beyond reach of our current computational capability.
    
    \item Let $p$ be a prime number different from $2$ and $3$. The cone generated by the columns of the matrix $\mathbf{B}$ in \eqref{eq:count-char-0} is also a counterexample to the normalized Nash blowup conjecture in characteristic $p$ with the same argument. Indeed, the set $\mathcal{B}$ appearing in \cref{def:nash_modified} is the same for characteristic $0$ or $p$.

    \item On the other hand, the cone generated by the columns of the matrix $\mathbf{B}$ in \eqref{eq:count-char-0}  
    do get resolved in characteristics 2 and 3 via normalized Nash blowup. Nevertheless, the cone $\rho_4(5)$ does lead to different counterexamples to the Nash blowup conjecture. We refer the reader to \cite{CDLAL} for the cones that provide cycles in the digraphs $\mathcal{K}_2(\mathsf{M})$ and $\mathcal{K}_3(\mathsf{M})$, with $\mathsf{M}=\mathbf{Z}^4$. 
\end{enumerate}
\end{remark}

The toric variety $X$ associated with the cone  generated by the columns of the matrix $\mathbf{B}$ in \eqref{eq:count-char-0} exhibits rather unremarkable singularities. We now present examples of affine toric varieties with well-studied types of singularities that are not resolved by iterating the normalized Nash blowup. In each case, the failure of resolution occurs because the cone  generated by the columns of the matrix $\mathbf{B}$ in \eqref{eq:count-char-0} eventually reappears during the iteration of the normalized Nash blowup algorithm.

\begin{theorem}\label{prop:galore}
For each of the following classes of singularities over characteristic zero fields, there exists an affine toric variety exhibiting the prescribed singularity such that the iteration of the normalized Nash blowup fails to resolve its singularities:
\begin{enumerate}
    \item Hypersurface singularities.
    \item Cyclic quotient singularities.
    \item $\mathbf{Q}$-factorial Gorenstein singularities.
\end{enumerate}
\end{theorem}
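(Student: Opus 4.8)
The plan is to reduce all three cases to the single self-looping cone $\mathbf{B}$ of \eqref{eq:count-char-0}. Since $\mathbf{B}$ is its own child in $\mathcal{K}_0(\mathsf{M})$ with $\mathsf{M}=\mathbf{Z}^4$, any cone $\sigma^\vee$ that admits $\mathbf{B}$ as a descendant has a normalized Nash blowup iteration that never reaches $\epsilon$, and hence the associated toric variety $\operatorname{Spec}\mathbf{k}[\sigma^\vee\cap\mathsf{M}]$ is not resolved. It therefore suffices, for each of the three prescribed classes of singularities, to exhibit one explicit cone $\sigma^\vee\subset\mathsf{M}_\mathbf{R}$ whose toric variety has a singularity of that class and which is an ancestor of $\mathbf{B}$ in $\mathcal{K}_0(\mathbf{Z}^4)$. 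In particular, since the normalized Nash blowup is applied to $\sigma^\vee$ and does not preserve any of these special properties, there is no reason one could stay inside the class, so the reduction to the universal cone $\mathbf{B}$ is the natural move.

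First I would record the combinatorial translations of the three singularity types, writing $S=\sigma^\vee\cap\mathsf{M}$ and $\sigma=(\sigma^\vee)^\vee\subset\mathsf{N}_\mathbf{R}$. For (1), $\operatorname{Spec}\mathbf{k}[S]$ is a singular hypersurface precisely when $\sigma^\vee$ is not unimodular and the Hilbert basis of $S$ has exactly $n+1=5$ elements: then the toric ideal is a height-one prime in a polynomial ring in $n+1$ variables, hence principal, and non-unimodularity forces the variety to be singular. For (2), $\operatorname{Spec}\mathbf{k}[S]$ is a cyclic quotient singularity precisely when $\sigma^\vee$ is simplicial and the cokernel of the matrix of primitive ray generators of $\sigma$ is a cyclic group. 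For (3), $\operatorname{Spec}\mathbf{k}[S]$ is $\mathbf{Q}$-factorial Gorenstein precisely when $\sigma^\vee$ (equivalently $\sigma$) is simplicial and there is $m_0\in\mathsf{M}$ with $\langle m_0,v\rangle=1$ for every primitive ray generator $v$ of $\sigma$.

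Next I would search the already-computed portion of $\mathcal{K}_0(\mathbf{Z}^4)$ — the $1.5$ million vertices produced by \cref{algo:res_tree}, which contains $\rho_4(5)$ and the whole chain of descendants running from $\rho_4(5)$ down to $\mathbf{B}$ — for vertices satisfying criteria (1), (2), (3) respectively; note that $\rho_4(5)$ itself is $\mathbf{Q}$-factorial but neither Gorenstein nor a cyclic quotient, so genuinely different ancestors of $\mathbf{B}$ are needed for (2) and (3). Having located candidates, I would then present the three explicit matrices and verify each example directly: run a few rounds of the normalized Nash blowup algorithm (\cref{def:nash_modified}) and check that $\mathbf{B}$ reappears, and confirm the singularity type by the corresponding finite check above — the size of a Normaliz-computed Hilbert basis for (1), simpliciality together with a Smith normal form computation for (2), and simpliciality together with solving a small linear system for the Gorenstein vector $m_0$ for (3).

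The main obstacle is the search itself rather than any of the verifications. There is no structural reason that the set of ancestors of $\mathbf{B}$ should contain cones of every prescribed type; in fact, as noted in the remark following \cref{prop:reeves_45}, $\mathbf{B}$ is the only cone leading to a cycle of any length found anywhere in $1.5$ million vertices of $\mathcal{K}_0(\mathbf{Z}^4)$, so the ``basin'' of cones flowing into $\mathbf{B}$ is small and the examples must be extracted by targeted computer search rather than written down by hand. This is precisely why the theorem is genuinely computer-assisted, as announced at the start of this section. A secondary, purely bookkeeping, difficulty is keeping straight which conditions are imposed on $\sigma$ and which on $\sigma^\vee$, since the normalized Nash blowup is run on $\sigma^\vee\subset\mathsf{M}_\mathbf{R}$ while the Gorenstein and cyclic-quotient conditions are most naturally phrased through the dual cone $\sigma\subset\mathsf{N}_\mathbf{R}$.
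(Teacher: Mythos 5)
Your proposal follows essentially the same route as the paper: the paper's proof exhibits three explicit cones (generated by matrices $\mathbf{A}_1,\mathbf{A}_2,\mathbf{A}_3$) that reach the self-looping cone of \eqref{eq:count-char-0} after a few iterations of the normalized Nash blowup, and certifies the singularity types by exactly the combinatorial criteria you list (Hilbert basis of cardinality $5$ for the hypersurface case, simpliciality plus cyclicity of $\mathsf{N}/\mathsf{N}_\sigma$ for the quotient case, and simpliciality plus a Gorenstein vector $m=[1,1,1,1]^T$ for the $\mathbf{Q}$-factorial Gorenstein case). The only thing missing from your outline is the explicit output of the computer search, which is what the paper supplies.
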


\begin{proof}
Consider the following three matrices
    \begin{equation*}
\mathbf{A}_1 =
\begin{bmatrix}
1 & 0 & 0 & 0 & 1 \\
0 & 1 & 0 & 0 & 1 \\
0 & 0 & 1 & 0 & -15 \\
0 & 0 & 0 & 1 & -5
\end{bmatrix}, \quad
\mathbf{A}_2 =
\begin{bmatrix}
1 & 2 & 0 & 4 \\
0 & 3 & 0 & 0 \\
0 & 0 & 1 & 3 \\
0 & 0 & 0 & 12
\end{bmatrix}, \quad
\mathbf{A}_3 =
\begin{bmatrix}
1 & 0 & 0 & 9 \\
0 & 1 & 0 & 10 \\
0 & 0 & 1 & 11 \\
0 & 0 & 0 & 12
\end{bmatrix}.
\end{equation*}
\begin{enumerate}
\item The cone $\sigma^\vee$ generated by the columns of the matrix $\mathbf{A}_1$ reaches the cone generated by the columns of $\mathbf{B}$ in \eqref{eq:count-char-0} after four iterations of the normalized Nash blowup. Moreover, the cardinality of the Hilbert basis of the cone $\sigma^\vee$ is five, given by the its five primitive ray generators. Hence, the corresponding 4-dimensional toric variety can be embedded in $\mathbf{k}^5$ as a hypersurface. The corresponding equation in $\mathbf{k}^5$ is 
\begin{equation*}
x_1x_2 - x_3^9x_4^5x_5=0.
\end{equation*}

\item The cone $\sigma^\vee$ generated by the columns of the matrix $\mathbf{A}_2$ reaches the cone generated by the columns of $\mathbf{B}$ in \eqref{eq:count-char-0} after two iterations of the normalized Nash blowup. The PALP normal form of the dual cone $\sigma\subset \mathsf{N}_\mathbf{R}$ is generated by the columns of
\begin{equation*}
\begin{bmatrix}
1 & 0 & 0 & 4 \\
0 & 1 & 0 & 8 \\
0 & 0 & 1 & 3 \\
0 & 0 & 0 & 12
\end{bmatrix}.
\end{equation*}
This proves that the corresponding affine toric variety has cyclic quotient singularities. Indeed, a simplicial toric variety has cyclic quotient singularities if and only if the group $\mathsf{N}/\mathsf{N}_\sigma$ is cyclic, where $\mathsf{N}_\sigma$ is the sublattice of $\mathsf{N}$ generated by the primitive ray generators of $\sigma$. In this example $\mathsf{N}/\mathsf{N}_\sigma\simeq \mathbf{Z}/12\mathbf{Z}$.

\item  The cone $\sigma^\vee$ generated by the columns of the matrix $\mathbf{A}_3$ reaches the cone generated by the columns of $\mathbf{B}$ in \eqref{eq:count-char-0} after two iterations of the normalized Nash blowup. Moreover, its dual cone is the cone $\sigma\subset \mathsf{N}_\mathbf{R}$ generated by the columns of the matrix 
\begin{equation*}
\begin{bmatrix}
0 & 4 & 0 & 0 \\
0 & 0 & 6 & 0 \\
0 & 0 & 0 & 12 \\
1 & -3 & -5 & -11
\end{bmatrix}.
\end{equation*}
The point $m = [1,1,1,1]^T$ satisfies $\langle m, \rho \rangle = 1$ for every primitive ray generator $\rho$ of $\sigma$ (that is, for each column of the matrix above). Hence the corresponding toric variety has Gorenstein singularities by \cite[Theorem~3.8]{Dai02}. Moreover, the fact that $\sigma^\vee$ is simplicial is equivalent to the toric variety being $\mathbf{Q}$-factorial \cite[Section~11.4]{cox2011toric}.\qedhere
\end{enumerate}
\end{proof}

\begin{remark}
In our database of 1.5 million cones, we have not found a single instance of an isolated singularity that fails to be resolved by iterating the normalized Nash blowup.
\end{remark}

\subsection{Counterexamples to normalized Nash blowup in dimension 5}

We populated the digraph $\mathcal{K}_0(\mathsf{M})$ with $\mathsf{M}=\mathbf{Z}^5$ with 0.5 million vertices by running \cref{algo:res_tree} and we investigated the existence of cycles in it.

In stark contrast with the case of dimension 4 but in line with the case of non-normalized Nash blowup in dimension 3, in the case of dimension 5 we were able to find several different cycles providing counterexamples to the normalized Nash blowup conjecture. Indeed, we have found cycles of lengths in the set $\{1,2,4,5,8,9\}$. We do not know if the missing intermediate numbers are only an artifact of our limited computer capabilities or their absence reveals a real property of normalized Nash blowups. Moreover, it is an intriguing question whether loops of arbitrary large length can be found in $\mathcal{K}_0(\mathsf{M})$.

To illustrate these findings, let us exhibit the cones giving cycles of lenght 1 and 9. The cone generated in $\mathsf{M}_\mathbf{R}$ with $\mathsf{M}=\mathbf{Z}^5$ by the columns of the matrix
\begin{equation*}
\begin{bmatrix}
1 & 0 & 0 & 0 & 0 & 2 & 1 & 1 \\
0 & 1 & 0 & 0 & 0 & 2 & 2 & 2 \\
0 & 0 & 1 & 0 & 0 & -1 & -1 & 0 \\
0 & 0 & 0 & 1 & 0 & 1 & 1 & 0 \\
0 & 0 & 0 & 0 & 1 & -2 & -1 & -1
\end{bmatrix}.
\end{equation*}
is the only vertex of a loop in the digraph $\mathcal{K}_0(\mathsf{M})$. On the other hand, the cone generated by the columns of the matrix 
\begin{equation*}
\begin{bmatrix}
1 & 0 & 0 & 0 & 0 & 1 & -1 \\
0 & 1 & 0 & 0 & 0 & 3 & 0 \\
0 & 0 & 1 & 0 & 0 & 1 & 1 \\
0 & 0 & 0 & 1 & 0 & -1 & 1 \\
0 & 0 & 0 & 0 & 1 & -2 & 0
\end{bmatrix}.
\end{equation*}
is one of the vertices of a cycle of length 9 in $\mathcal{K}_0(\mathsf{M})$.

\begin{remark}
As a final comment on this section, we emphasize that the nonnormalized version of the Nash blowup conjecture is significantly more demanding computationally than the normalized version. Indeed, in \cref{def:nash_modified_nonnormal}, one must compute a semigroup for every element of $\mathcal{B}$. In contrast, in \cref{def:nash_modified}, most elements of $\mathcal{B}$ are not vertices of the relevant polyhedron and therefore require no computation at the final step.
\end{remark}

\section{Further computational results and future directions}

Here we outline what we consider the main remaining challenges and comment on the progress we have made toward addressing them.

\subsection{Nash blowup in dimension two}

The remaining open questions in this case are the following: the resolution properties of the non-normalized Nash blowup in characteristic zero, even for toric varieties, and the resolution properties of the normalized Nash blowup for varieties over fields of positive characteristic, although in the toric case the conjecture is known to hold.

The case of non-normalized Nash blowup of toric surfaces over an algebraically closed field of characteristic zero has already some amount of positive evidence \cite{Reb,DuarSurf,DS}. We have further explored this case computationally: we populated the digraph $\mathcal{S}(\mathsf{M})$ with $\mathsf{M}=\mathbf{Z}^2$ with 50 thousand semigroups and every single one of them gets resolved via Nash blowup. 

\subsection{Normalized Nash blowup in dimension three}

The case of three dimensional toric varieties was previously studied in \cite{Ataetal} where they computed the resolution of 2 thousand cones and concluded that every one of them was eventually resolved via normalized Nash blowup. Continuing with their line of work, we populated the digraph $\mathcal{K}_0(\mathsf{M})$ with $\mathsf{M}=\mathbf{Z}^3$ with 2.5 million cones and reach the same conclusion: every single one of these cones gets resolved via Nash blowups.

In marked contrast with the usual toric algorithm for resolving the singularities of toric varieties, we currently lack an invariant that is guaranteed to improve under the iteration of normalized Nash blowups. As already noted in \cite{Ataetal}, every proposed invariant has eventually been refuted by a counterexample. The search for a satisfactory measure of improvement under Nash blowups continues to pose a significant challenge.

Since the Reeves cones $\rho_n(j)$ played a significant role in the counterexample we found in dimension 4, it is natural to investigate their behavior in dimension 3 as well. The following result summarizes our findings. 

\begin{proposition}
For every $1 \leq j \leq 1000$, the Reeves cone $\rho_3(j)$ is resolved after finitely many iterations of the normalized Nash blowup over a field of characteristic zero.
\end{proposition}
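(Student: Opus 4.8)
The plan is to establish the statement by direct computation: for each $j\in\{1,\dots,1000\}$ we run the breadth-first search of \cref{algo:res_tree} on the cone $\rho_3(j)$ and verify that it halts. By construction, halting means that the resolution subgraph of $\rho_3(j)$ is finite and that every directed path out of $\rho_3(j)$ eventually reaches the unimodular cone $\epsilon$, which is exactly the assertion of \cref{conj:normalized_tree} for that cone. Recall that a single normalized Nash blowup (\cref{def:nash_modified}) is itself a terminating computation — a Hilbert basis computation (delegated to Normaliz), a determinant screening to build the set $\mathcal{B}$, and the extraction of the vertices and feasible cones of the unbounded polyhedron $\operatorname{Conv}(\mathcal{B})+\sigma^\vee$ — so for a fixed $j$ the only thing in question is whether \cref{algo:res_tree} terminates, and its run, when it does, is a finite certificate of the claim.

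First I would dispatch the trivial case $j=1$: the matrix defining $\rho_3(1)$ has determinant $1$, so the cone is already unimodular and hence resolved. For $j\geq 2$ the index of $\rho_3(j)$ equals $j$ (the determinant of its defining matrix), and the resolution subgraphs grow with $j$, so the computation must be organized to remain feasible. The essential device is the bookkeeping with the global sets \texttt{VERTICES} and \texttt{EDGES}: descendants shared among different $\rho_3(j)$, and with the rest of the already-populated portion of $\mathcal{K}_0(\mathbf{Z}^3)$, are computed only once, and a cone is recognized as already seen through its PALP normal form. Concretely, the claim follows once one checks that every $\rho_3(j)$ with $1\le j\le 1000$ lands inside the subgraph of $\mathcal{K}_0(\mathbf{Z}^3)$ that has already been verified to be resolved; as a byproduct the search records, for each $j$, the maximal distance from $\rho_3(j)$ to $\epsilon$, which doubles as a check that no intermediate cone ever revisits itself.

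The main obstacle is entirely computational. As emphasized above and in \cite{Ataetal}, there is no known invariant guaranteed to decrease under iterated normalized Nash blowup in dimension three, so the termination of \cref{algo:res_tree} cannot be predicted a priori for a given $j$ and must simply be observed — a genuinely pathological $\rho_3(j)$ in this range would instead yield a counterexample to \cref{conj:normalized_tree}. The cost is dominated by the Hilbert basis computations in the intermediate cones, whose Hilbert bases can become large as $j$ grows, and by the sheer size of the resolution subgraphs, so that covering the full range $1\le j\le 1000$ is practical only because partial knowledge of $\mathcal{K}_0(\mathbf{Z}^3)$ is cached and reused. Correctness in turn hinges on the PALP normal form being a complete invariant for unimodular equivalence of full-dimensional cones, so that cached vertices are neither missed nor spuriously identified; this is the point that most deserves care in writing up the argument.
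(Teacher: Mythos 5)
Your proposal is correct and coincides with the paper's approach: the proposition is established purely computationally, by running \cref{algo:res_tree} on each $\rho_3(j)$ for $1\le j\le 1000$ and observing that it terminates, with the cached partial knowledge of $\mathcal{K}_0(\mathbf{Z}^3)$ and the PALP normal form handling unimodular equivalence exactly as you describe. The paper offers no further argument (and its accompanying remark confirms your observation that no inductive structure among the $\rho_3(j)$ is available), so your write-up is if anything more explicit about the certificate than the paper itself.
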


\begin{remark}
Proving that all Reeves cones $\rho_3(j)$ are resolved would represent a substantial advance. Except for $j = 1$, which corresponds to the unimodular cone, none of the computed Reeves cones is a descendant of any other, so there is no straightforward inductive argument available.
\end{remark}

\subsection{Further questions}

\begin{itemize}
\setlength{\itemsep}{0.5em}
    \item \emph{Unique cycle in dimension four.} It remains unknown whether there are other cycles in the digraph $\mathcal{K}_{0}(\mathsf{M})$ with $\mathsf{M=\mathbf{Z}^4}$ other than the loop we have found.

    \item \emph{One step resolution.} In every dimension greater or equal than 2, it is known that there exist infinitely many cones in $\mathcal{K}_{0}(\mathsf{M})$ that are resolved by a single normalized Nash blowup \cite{CDLLcharfree}, but a complete characterization of these cones is still lacking.

    \item \emph{Infinite families that get resolved.} Beyond the examples that are resolved by a single normalized Nash blowup, we are not aware of any systematic families, in dimension greater than two, of cones that are resolved by iterating the normalized Nash blowup. As noted above, even proving that all Reeves cones in dimension 3 are resolved by the normalized Nash blowup would already constitute a substantial advance.

    \item \emph{Isolated singularities.} None of the counterexamples found so far contradicts the possibility that isolated singularities may always be resolved by Nash blowup or normalized Nash blowup.

\end{itemize}

\appendix

\section{Normalized Nash blowup as a subdivision problem}

In the main body of the text we described affine toric varieties in terms of a cone $\sigma^\vee \subset \mathsf{M}_\mathbf{R}$. However, the more common approach is to describe them via the dual cone $\sigma \subset \mathsf{N}_\mathbf{R}$. We adopted the $\mathsf{M}$-side viewpoint because it provides a unified framework for both the normalized and the non normalized Nash blowup. The main advantage of working in the $\mathsf{N}$-side is that the globalization process, which constructs general toric varieties by gluing affine pieces, is more naturally expressed using the cones $\sigma \subset \mathsf{N}_\mathbf{R}$ rather than their duals $\sigma^\vee \subset \mathsf{M}_\mathbf{R}$. In this section, we describe the normalized Nash blowup from this perspective.

\begin{definition}[Normalized Nash blowup on the $\mathsf{N}$-side]
\label{def:nash}

Given a cone $ \sigma \subseteq \mathsf{N}_{\mathbf{R}} $ whose dual cone is $ \sigma^\vee \subseteq \mathsf{M}_{\mathbf{R}} $ the Nash subdivision of $\sigma$ is the result of the following set of steps, which forms a collections of cones that are a subdivision of $ \sigma$.

\begin{description} 
\item[P1] Compute a Hilbert basis $ \mathcal{H} $ of $S = \sigma^\vee\cap \mathsf{M}$.
\item[P2] Compute the set $\displaystyle \mathcal{B} = \left\{ \sum_{h \in B} h \mid B \subseteq \mathcal{H} \text{ is a basis for }\mathsf{M}\otimes_\mathbf{Z}\mathbf{k}  \right\} $.
\item[P3] Return the normal fan of the unbounded polyhedron $\operatorname{Conv}(\mathcal{B}) + \sigma^\vee$.
\end{description}
Given a fan, the Nash subdivision is the fan obtained applying the above process to each maximal cone of the fan.
\end{definition}

The result of this process is a subdivision of the original cone. Remark that this process is equivalent to \cref{def:nash_modified} with the only difference that we gather the duals of the feasible cones in a fan in $\mathsf{N}_\mathsf{R}$. In this terms, the normalized Nash blowup conjecture becomes:

\begin{conjecture}[Normalized Nash blowup conjecture on the $\mathsf{N}$-side]
    Let $\sigma\subset \mathsf{N}_\mathbf{R}$ be a cone. Then, the iteration of the Nash subdivision process eventually stabilizes in an unimodular triangulation.
\end{conjecture}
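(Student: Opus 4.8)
The plan is to prove termination by producing a \emph{monovariant}: a quantity attached to a cone (and thence to a fan) that lies in a well-ordered set, never increases under one Nash subdivision, strictly decreases on every non-unimodular maximal cone, and attains its minimum precisely on unimodular cones. Because the Nash subdivision in \cref{def:nash} is applied maximal-cone by maximal-cone, it suffices to control the behaviour of this quantity on a single full-dimensional $\sigma \subset \mathsf{N}_\mathbf{R}$ and then track the finite multiset of values over the maximal cones of the resulting fan; well-foundedness of the value set would then force the iteration to stop after finitely many steps.

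First I would nail down the structural facts that make such an argument even possible. By construction the output of \cref{def:nash} is a genuine subdivision of $\sigma$, so each new maximal cone is contained in $\sigma$ and the process can only ever refine. Next I would verify the base case: $\sigma$ is a fixed point of the process exactly when it is unimodular, since then $S = \sigma^\vee \cap \mathsf{M}$ is generated by a lattice basis, the set $\mathcal{B}$ of \cref{def:nash_modified} reduces to a single point, and the polyhedron $\operatorname{Conv}(\mathcal{B}) + \sigma^\vee$ has a unique vertex whose feasible cone returns $\sigma^\vee$. This identifies the stable configurations with unimodular triangulations, which is exactly the target of the conjecture.

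The heart of the argument is then to exhibit the monovariant. The most natural candidate is the index (lattice multiplicity) of the maximal cones: one would try to show that every maximal cone produced by a single Nash subdivision has strictly smaller index than its parent unless the parent is already unimodular, so that the maximum index over the fan is a strictly decreasing positive integer. Finer variants would replace a single integer by the sorted multiset of indices under the lexicographic order, or by quantities more sensitive to the combinatorics of $\operatorname{Conv}(\mathcal{B}) + \sigma^\vee$, such as the Hilbert basis cardinality of $S$ or a weighted count of lattice points in a fundamental domain.

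The hard part --- and, in truth, the crux of the whole problem --- is that no such monovariant is known, and the obvious candidates provably fail. As already observed in \cite{Ataetal} and reiterated in our discussion above, every proposed measure of improvement is eventually refuted by an explicit cone on which it fails to decrease under a Nash subdivision step; a single step can make the resulting maximal cones more complicated by every naive gauge even as the singularity is meant to improve overall. Overcoming this obstacle would require either a genuinely new invariant built from the feasible cones at the vertices of $\operatorname{Conv}(\mathcal{B}) + \sigma^\vee$, or a global well-foundedness argument on the digraph $\mathcal{K}_0(\mathsf{M})$ that does not rest on a pointwise-decreasing quantity. The behaviour of the Reeves cone $\rho_4(5)$ from \cref{def:reeves} under our computations signals that this obstacle is deep rather than merely technical, and suggests that any positive resolution along these lines is most plausible in the low-dimensional regime.
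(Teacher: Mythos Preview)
The statement you are trying to prove is a \emph{conjecture}, not a theorem, and the paper does not prove it --- it \emph{disproves} it in dimension four and higher. The N-side formulation in the appendix is explicitly declared to be equivalent to \cref{conj:normalized_tree}, and the main content of the paper is the exhibition of a cone $\sigma^\vee \subset \mathbf{R}^4$ (generated by the columns of the matrix $\mathbf{B}$ in \eqref{eq:count-char-0}) that is unimodularly equivalent to one of its own children. In N-side language, one step of the Nash subdivision of the dual cone produces a maximal cone isomorphic to the original, so the iteration never stabilizes.

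Your outline is therefore not merely incomplete but aimed at a false target. The monovariant strategy you sketch is in fact \emph{impossible} in general: the existence of a loop in $\mathcal{K}_0(\mathbf{Z}^4)$ means that any unimodular-equivalence invariant taking values in a well-ordered set must be constant along that loop, so it cannot strictly decrease on every non-unimodular cone. You come close to seeing this when you note that all proposed invariants have been refuted and when you flag $\rho_4(5)$, but you stop short of the conclusion the paper draws --- that the phenomenon is not a technical obstruction to be overcome but an actual counterexample. The only regime in which your plan could succeed is dimension at most three, and there the conjecture remains open; the paper offers only computational evidence, not a proof.
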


Let us consider an example in detail. As shown in \cref{fig:12}, let $\sigma$ be the cone generated by the columns of the matrix $\mathbf{A}$ whose dual cone $\sigma^\vee$ is generated by the columns of the matrix  $\mathbf{B}$ and whose Hilbert basis $\mathcal{H}$ is composed by the columns of the matrix $\mathbf{C}$, where
\begin{equation*}
\mathbf{A} = \begin{bmatrix}
-1 & 3  \\
2 & -1  \\
\end{bmatrix}, \quad
\mathbf{B} = \begin{bmatrix}
2 & 1  \\
1 & 3  \\
\end{bmatrix}, \quad\mbox{and}\quad
\mathbf{C} = \begin{bmatrix}
2 & 1 & 1 & 1 \\
1 & 3 & 2 & 1 \\
\end{bmatrix}.
\end{equation*}

\begin{figure}[htbp]
\centering
\begin{minipage}{0.3\textwidth}
    \centering
		\begin{tikzpicture}[scale =0.5]
				\fill[gray!15] (0,0) -- (-2,4) -- (3,4) -- (3,-1) -- cycle ;

				\foreach \x in {-3,-2,...,3}
						\foreach \y in {-1,0,...,4}
								\fill (\x,\y) circle (2pt);
				\draw[color=blue,fill=blue] (0,0) circle (3pt);

				\draw[color=blue,-latex,thick] (0,0) -- (3,-1);
				\draw[color=blue,-latex,thick] (0,0) -- (-1,2);
    \end{tikzpicture}
    \caption*{Cone $\sigma$.}
\end{minipage}
\hfill
\begin{minipage}{0.3\textwidth}
    \centering
		\begin{tikzpicture}[scale = 0.5]

				\fill[gray!50] (0,0) -- (4/3,4) -- (3,4) -- (3,3/2) -- cycle ;

				\foreach \x in {-3,-2,...,3}
						\foreach \y in {-1,0,...,4}
								\fill (\x,\y) circle (2pt);
				\draw[color=red,fill=red] (0,0) circle (3pt);

				\draw[color=red,-latex,thick] (0,0) -- (1,3);
				\draw[color=red,-latex,thick] (0,0) -- (2,1);
    \end{tikzpicture}
    \caption*{Cone $\sigma^\vee$.}
\end{minipage}
\hfill
\begin{minipage}{0.3\textwidth}
    \centering
    \begin{tikzpicture}[scale = 0.5]

				\fill[gray!50] (0,0) -- (4/3,4) -- (3,4) -- (3,3/2) -- cycle ;

				\foreach \x in {-3,-2,...,3}
						\foreach \y in {-1,0,...,4}
								\fill (\x,\y) circle (2pt);
				\draw[color=red,fill=red] (0,0) circle (3pt);

				\draw[color=red,-latex,thick] (0,0) -- (1,3);
				\draw[color=red,-latex,thick] (0,0) -- (2,1);
				\draw[color=red,dashed] (1,3) -- (3,4) -- (2,1);

				\draw (1,1) circle (5pt);
				\draw (1,2) circle (5pt);
				\draw (1,3) circle (5pt);
				\draw (2,1) circle (5pt);
    \end{tikzpicture}
    \caption*{Hilbert Basis.}
\end{minipage}
\caption{The first two steps of the procedure.}
\label{fig:12}
\end{figure}

Now, in \cref{fig:three}, we show on the left the set $\mathcal{B}$ together with the cone $\operatorname{Conv}(\mathcal{B})+\sigma^\vee$. Finally, on the right, we show the Nash subdivision of the cone $\sigma$.

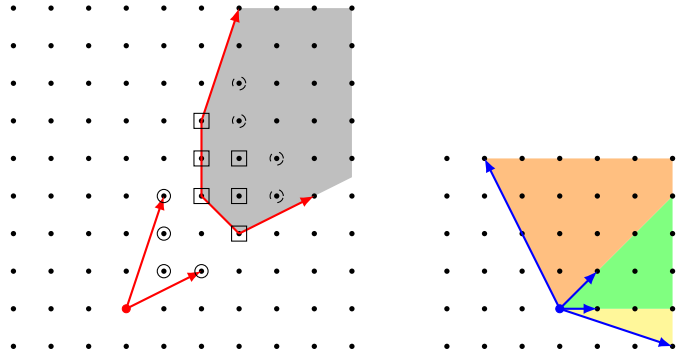
\begin{figure}[ht]
	\begin{tikzpicture}[scale = 0.5]
										
			\fill[gray!50] (3,2) -- (2,3) -- (2,5) -- (3,8) -- (6,8) -- (6,7/2) -- cycle ;

			\foreach \x in {-3,-2,...,6}
					\foreach \y in {-1,0,...,8}
							\fill (\x,\y) circle (2pt);
			\draw[color=red,fill=red] (0,0) circle (3pt);

			\draw[color=red,-latex,thick] (0,0) -- (1,3);
			\draw[color=red,-latex,thick] (0,0) -- (2,1);

			\draw[color=red,-latex,thick] (2,5) -- (3,8);
			\draw[color=red,-latex,thick] (3,2) -- (5,3);
			\draw[color=red,thick] (3,2) -- (2,3) -- (2,5);

			\draw (1,1) circle (5pt);
			\draw (1,2) circle (5pt);
			\draw (1,3) circle (5pt);
			\draw (2,1) circle (5pt);

			\draw (3-.2,2-.2) rectangle (3+.2,2+.2);
			\draw (3-.2,3-.2) rectangle (3+.2,3+.2);
			\draw (3-.2,4-.2) rectangle (3+.2,4+.2);
			\draw (2-.2,3-.2) rectangle (2+.2,3+.2);
			\draw (2-.2,4-.2) rectangle (2+.2,4+.2);
			\draw (2-.2,5-.2) rectangle (2+.2,5+.2);
			\draw[dashed] (4,3) circle (5pt);
			\draw[dashed] (4,4) circle (5pt);
			\draw[dashed] (3,5) circle (5pt);
			\draw[dashed] (3,6) circle (5pt);

	\end{tikzpicture}
	\hspace{1cm}
	\begin{tikzpicture}[scale = 0.5]
				\fill[yellow!50] (0,0) -- (3,-1) -- (3,0) -- cycle ;
				\fill[green!50] (0,0) -- (3,0) -- (3,3) -- cycle ;
				\fill[orange!50] (0,0) -- (-2,4) -- (3,4) -- (3,3) -- cycle ;

				\foreach \x in {-3,-2,...,3}
						\foreach \y in {-1,0,...,4}
								\fill (\x,\y) circle (2pt);
				\draw[color=blue,fill=blue] (0,0) circle (3pt);

				\draw[color=blue,-latex,thick] (0,0) -- (3,-1);
				\draw[color=blue,-latex,thick] (0,0) -- (1,0);
				\draw[color=blue,-latex,thick] (0,0) -- (1,1);
				\draw[color=blue,-latex,thick] (0,0) -- (-2,4);
	\end{tikzpicture}
\caption{The resulting subdivision of the original cone $\sigma$.}
\label{fig:three}
\end{figure}

\bibliographystyle{alpha}
\bibliography{ref}

\newcommand{\etalchar}[1]{$^{#1}$}
\begin{thebibliography}{CDLAL25b}

\bibitem[ALP{\etalchar{+}}11]{Ataetal}
Atanas Atanasov, Christopher Lopez, Alexander Perry, Nicholas Proudfoot, and Michael Thaddeus.
\newblock Resolving toric varieties with {N}ash blowups.
\newblock {\em Exp. Math.}, 20(3):288--303, 2011.

\bibitem[BDGM15]{beck2015very}
Matthias Beck, Jessica Delgado, Joseph Gubeladze, and Mateusz Micha{\l}ek.
\newblock Very ample and koszul segmental fibrations.
\newblock {\em Journal of Algebraic Combinatorics}, 42(1):165--182, 2015.

\bibitem[BISvdO]{Normaliz}
W.~Bruns, B.~Ichim, C.~S\"oger, and U.~von~der Ohe.
\newblock Normaliz. algorithms for rational cones and affine monoids.
\newblock Available at \texttt{https://www.normaliz.uni-osnabrueck.de}.

\bibitem[CDLAL]{CDLAL}
Federico Castillo, Daniel Duarte, Maximiliano Leyton-\'Alvarez, and Alvaro Liendo.
\newblock Nash blowup fails to resolve singularities in dimensions four and higher.
\newblock {\em To appear in Annals of Mathematics}.
\newblock \href{https://annals.math.princeton.edu/articles/22273}{https://annals.math.princeton.edu/articles/22273}.

\bibitem[CDLAL25a]{CDLLcharfree}
Federico Castillo, Daniel Duarte, Maximiliano Leyton-\'Alvarez, and Alvaro Liendo.
\newblock Characteristic-free normalized nash blowup of toric varieties.
\newblock {\em arXiv:2501.09811}, 2025.

\bibitem[CDLAL25b]{CDLL3d}
Federico Castillo, Daniel Duarte, Maximiliano Leyton-\'Alvarez, and Alvaro Liendo.
\newblock Non-normalized nash blowup fails to resolve singularities in dimension three.
\newblock {\em arXiv:2511.01772}, 2025.

\bibitem[CL25]{github}
Federico Castillo and Alvaro Liendo.
\newblock Toric nash blowups, 2025.
\newblock \href{https://github.com/efecastillo/toric-nash-blowups}{https://github.com/efecastillo/toric-nash-blowups}.

\bibitem[CLS11]{cox2011toric}
David~A. Cox, John~B. Little, and Henry~K. Schenck.
\newblock {\em Toric varieties}, volume 124.
\newblock American Mathematical Soc., 2011.

\bibitem[Dai02]{Dai02}
Dimitrios~I. Dais.
\newblock Resolving 3-dimensional toric singularities.
\newblock In {\em Geometry of toric varieties}, volume~6 of {\em S\'emin. Congr.}, pages 155--186. Soc. Math. France, Paris, 2002.

\bibitem[DDR25]{DDR}
Tha\'is~M. Dalbelo, Daniel Duarte, and Maria Aparecida~Soares Ruas.
\newblock Nash blowups of 2-generic determinantal varieties in positive characteristic.
\newblock {\em Pure Appl. Math. Q.}, 21(4):1557--1575, 2025.

\bibitem[DJNnB24a]{DJNB}
Daniel Duarte, Jack Jeffries, and Luis N\'u\~nez Betancourt.
\newblock Nash blowups of toric varieties in prime characteristic.
\newblock {\em Collect. Math.}, 75(3):629--637, 2024.

\bibitem[DJNnB24b]{DJN24}
Daniel Duarte, Jack Jeffries, and Luis N\'u\~nez Betancourt.
\newblock Nash blowups of toric varieties in prime characteristic.
\newblock {\em Collect. Math.}, 75(3):629--637, 2024.

\bibitem[DS25]{DS}
Daniel Duarte and Jawad Snoussi.
\newblock Nash blowups of normal toric surfaces: the case of one and two segments.
\newblock {\em published online in Beitr. Algebra Geom., https://doi.org/10.1007/s13366-025-00805-x}, 2025.

\bibitem[Dua14]{DuarSurf}
Daniel Duarte.
\newblock Nash modification on toric surfaces.
\newblock {\em Rev. R. Acad. Cienc. Exactas F\'{\i}s. Nat. Ser. A Mat. RACSAM}, 108(1):153--171, 2014.

\bibitem[Ful93]{fulton1993introduction}
William Fulton.
\newblock {\em Introduction to toric varieties}.
\newblock Number 131 in Annals of mathematics studies. Princeton university press, 1993.

\bibitem[GK13]{grinis2013normal}
Roland Grinis and Alexander Kasprzyk.
\newblock Normal forms of convex lattice polytopes.
\newblock {\em arXiv preprint arXiv:1301.6641}, 2013.

\bibitem[GM12]{GrMi12}
Dima Grigoriev and Pierre~D. Milman.
\newblock Nash resolution for binomial varieties as {E}uclidean division. {A} priori termination bound, polynomial complexity in essential dimension 2.
\newblock {\em Adv. Math.}, 231(6):3389--3428, 2012.

\bibitem[GPT14]{GoTe14}
Pedro~D. Gonz\'alez~P\'erez and Bernard Teissier.
\newblock Toric geometry and the {S}emple-{N}ash modification.
\newblock {\em Rev. R. Acad. Cienc. Exactas F\'is. Nat. Ser. A Mat. RACSAM}, 108(1):1--48, 2014.

\bibitem[GS77]{GS1}
Gerardo Gonzalez~Sprinberg.
\newblock \'{E}ventails en dimension {$2$} et transform\'{e} de {N}ash.
\newblock {\em Publications du Centre de Math\'{e}matiques de l'E.N.S.}, pages 1--68, 1977.

\bibitem[GS82]{GS2}
Gerardo Gonzalez-Sprinberg.
\newblock R\'{e}solution de {N}ash des points doubles rationnels.
\newblock {\em Ann. Inst. Fourier (Grenoble)}, 32(2):x, 111--178, 1982.

\bibitem[Hir64]{hironaka1964resolution}
Heisuke Hironaka.
\newblock Resolution of singularities of an algebraic variety over a field of characteristic zero: Ii.
\newblock {\em Annals of Mathematics}, 79(2):205--326, 1964.

\bibitem[Hir83]{Hi83}
Heisuke Hironaka.
\newblock On {N}ash blowing-up.
\newblock In {\em Arithmetic and geometry, {V}ol. {II}}, volume~36 of {\em Progr. Math.}, pages 103--111. Birkh\"auser, Boston, MA, 1983.

\bibitem[Kol09]{kollar2009lectures}
J{\'a}nos Koll{\'a}r.
\newblock {\em Lectures on resolution of singularities}, volume 166.
\newblock Princeton University Press, 2009.

\bibitem[KS04]{kreuzer2004palp}
Maximilian Kreuzer and Harald Skarke.
\newblock Palp: a package for analysing lattice polytopes with applications to toric geometry.
\newblock {\em Computer Physics Communications}, 157(1):87--106, 2004.

\bibitem[Nob75]{Nob75}
A.~Nobile.
\newblock Some properties of the {N}ash blowing-up.
\newblock {\em Pacific J. Math.}, 60(1):297--305, 1975.

\bibitem[Reb77]{Reb}
Vaho Rebassoo.
\newblock Desingularization properties of the nash blowing-up process, 1977.
\newblock Ph. D. dissertation, University of Washington.

\bibitem[Ree57]{reeve1957volume}
John~E Reeve.
\newblock On the volume of lattice polyhedra.
\newblock {\em Proceedings of the London mathematical society}, 3(1):378--395, 1957.

\bibitem[{Sag}24]{sagemath}
{Sage Developers}.
\newblock {\em {S}ageMath, the {S}age {M}athematics {S}oftware {S}ystem ({V}ersion 10.4)}, 2024.
\newblock {\tt https://www.sagemath.org}.

\bibitem[Sch98]{schrijver1998theory}
Alexander Schrijver.
\newblock {\em Theory of linear and integer programming}.
\newblock John Wiley \& Sons, 1998.

\bibitem[Sem54]{Se54}
J.~G. Semple.
\newblock Some investigations in the geometry of curve and surface elements.
\newblock {\em Proc. London Math. Soc. (3)}, 4:24--49, 1954.

\bibitem[Spi90]{Sp90}
Mark Spivakovsky.
\newblock Sandwiched singularities and desingularization of surfaces by normalized {N}ash transformations.
\newblock {\em Ann. of Math. (2)}, 131(3):411--491, 1990.

\end{thebibliography}

\end{document}